\documentclass[12pt,reqno]{amsart}
\usepackage[activeacute,english]{babel}

\usepackage[utf8]{inputenc}
\usepackage{amsmath} 
\usepackage{amsthm} 
\usepackage{amssymb} 
\usepackage{amscd} 

\usepackage{emptypage}
\usepackage{enumitem} 
\usepackage{mathtools} 
\usepackage{mathrsfs} 
\usepackage{upgreek}

\usepackage{hyperref}
\usepackage{thmtools}
\usepackage{cleveref}
\usepackage{autonum}

\usepackage{esint} 
\usepackage{graphicx,caption} 
\usepackage{float} 

\usepackage{pgf,tikz,pgfplots}
\usetikzlibrary{arrows}
\usetikzlibrary{intersections}
\usetikzlibrary{fadings}
\usepackage{wrapfig}
\usepackage{xcolor}
\definecolor{CBgreen}{HTML}{009E73}
\definecolor{CBorange}{HTML}{E69F00}

\usepackage[margin=0.9in]{geometry}
\newtheorem{theorem}{Theorem}[section]
\newtheorem{proposition}[theorem]{Proposition}
\newtheorem{lemma}[theorem]{Lemma}
\newtheorem{corollary}[theorem]{Corollary}

\theoremstyle{definition}

\theoremstyle{remark}
\newtheorem{remark}[theorem]{Remark}
\newtheorem*{remark*}{Remark}

\newcommand{\con}[1]{\mathbb{#1}}
\newcommand{\C}{\con{C}} 
\newcommand{\R}{\con{R}} 

\newcommand{\D}{\mathcal{D}}
\newcommand{\RR}{\mathcal{R}}

\newcommand{\Dom}{\mathrm{Dom}}

\newcommand{\disk}{\con{D}} 

\numberwithin{equation}{section}
\title[Eigenvalue bounds for quantum dot Dirac operators]{Eigenvalue bounds for quantum dot Dirac operators}

\author[J. Duran]{Joaquim Duran}
\address{J. Duran
\newline
Centre de Recerca Matem\`atica, Edifici C, Campus Bellaterra, 08193 Bellaterra, Spain}
\email{jduran@crm.cat}

\date{\today}
\subjclass[2010]{Primary: 35P05, 35Q40; Secondary: 47A10, 81Q10.}
\keywords{Dirac operator, spectral theory, shape optimization.}

\thanks{The author is supported by the Spanish grant PID2021-123903NB-I00 funded by MCIN/AEI/10.13039/501100011033, by ERDF ``A way of making Europe", and by the Catalan grant 2021-SGR-00087. This work is supported by the Spanish State Research Agency, through the Severo Ochoa and Mar\'ia de Maeztu Program for Centers and Units of Excellence in R\&D (CEX2020-001084-M), and more specifically by the grant CEX2020-001084-M-20-1. The author acknowledges CERCA Programme/Generalitat de Catalunya for institutional support.}

\begin{document}

\begin{abstract}
    We exploit the connection between quantum dot Dirac operators and~$\overline\partial$-Robin Laplacians. First, we find a graphical relation between their smallest positive eigenvalues, which allows us to deduce a recipe for translating bounds (from above and below) from one to the other. As an application, we provide new upper and lower bounds for the eigenvalues of the quantum dot Dirac operators, which depend only on geometric quantities of the underlying domain. In particular, we obtain some Faber-Krahn type inequalities for convex thin domains.
\end{abstract}

\maketitle 

\setcounter{tocdepth}{2}
\makeatletter
\def\l@subsection{\@tocline{2}{0pt}{2.5pc}{5pc}{}}
\makeatother
\tableofcontents

\section{Introduction}

Two-dimensional Dirac operators subject to infinite mass or zigzag boundary conditions are employed in the physics literature to model the behavior of electrons responsible for electrical conduction in graphene quantum dots and nano-ribbons~\cite{AkhmerovBeenakker,BerryMondragon1987,mccannfalko04,WurmRycerzetAl}. From the mathematical perspective, Dirac operators with zigzag boundary conditions were first studied in~\cite{Schmidt1995}, while a broader class of Dirac operators ---including those with infinite mass boundary conditions--- was studied in~\cite{Benguria2017Self,Benguria2017Spectral,Cassano2023}. Such a family of Dirac operators ---from now on called \emph{quantum dot Dirac operators}, and defined in~\eqref{def:Dirac_op_theta}--- was revisited in~\cite{DuranMasSanzPerela2026} with strong inspiration from the previous work~\cite{Antunes2021}. More specifically, in~\cite{DuranMasSanzPerela2026} it was shown a connection between the eigenvalues of the quantum dot Dirac operators and the eigenvalues of the so-called \emph{$\overline\partial$-Robin Laplacians} ---studied in~\cite{Duran2026}, and defined in~\eqref{eq:RodzinLaplacian}.

In this work we build on such a connection to improve the results presented in~\cite[Section~2.1]{DuranMasSanzPerela2026}, obtaining an approach to get upper and lower bounds for the smallest positive eigenvalue of the quantum dot Dirac operators. As a direct application, we derive some Faber–Krahn type inequalities for these operators which cover, for instance, the class of convex thin domains. Although the papers~\cite{Antunes2021,Antunes2024} provided some numerical evidence, to the best of our knowledge this represents the first instance in which the minimality of the disk is shown with respect to a wide class of domains for two-dimensional Dirac operators subject to infinite mass boundary conditions. Proving this for all domains is a hot open problem in spectral geometry; see~\cite[Problem~5.1]{ProblemListShapeOptimization}.

The precise statements of the results are presented in~\Cref{sec:main_results}. Before that, in the following sections we set the stage and recall the operators appearing in this paper.

\subsection{The setting} \label{sec:setting}

Throughout the work, $\Omega\subset \R^2$ is a bounded domain with $C^2$ boundary; see~\Cref{rmk:Lipschitz} for a justification on this regularity assumption and a comment regarding domains with Lipschitz boundary. We denote the unit outward normal vector field at~$\partial\Omega$ as~$\nu=(\nu_1,\nu_2)$, and the unit tangent vector field positively oriented to it as~$\tau=(-\nu_2,\nu_1)$. Based on the identification~$\R^2\equiv \C$, we abuse notation and write
\begin{equation}
    \R^2 \ni p = (p_1,p_2) \equiv p_1+ip_2 = p \in \C, \quad \text{and, accordingly,} \quad \overline p = p_1-ip_2;
\end{equation}
it will always be clear from the context which notation we are referring to. Moreover, we use the complex notation~$\partial_z := \frac{1}{2} (\partial_1-i\partial_2)$ and~$\partial_{\bar z} := \frac{1}{2}(\partial_1+i\partial_2)$, where~$\nabla:=(\partial_1,\partial_2)$ denotes the gradient in~$\R^2$.

Usual geometric quantities of $\Omega$ appear along the work, like its area~$|\Omega|$, its perimeter~$|\partial\Omega|$, and also
\begin{equation} \label{eq:FirstDirichlet}
    \Lambda_\Omega := \text{the first (smallest) eigenvalue of the Dirichlet Laplacian in } \Omega.
\end{equation}
Further geometric quantities of $\Omega$ that only appear in specific parts of this paper are discussed in~\Cref{sec:bounds,sec:FK}.

\subsection{The quantum dot Dirac operator} \label{sec:QD}

Given a parameter $m\geq0$ ---that usually denotes the mass---, let $-i \sigma \cdot \nabla + m \sigma_3$ denote the differential expression of the free Dirac operator in~$\R^2$. Here $\sigma := (\sigma_1, \sigma_2)$, where
\begin{equation}
    \sigma_1 := 
    \begin{pmatrix}
	    0&1\\
        1&0
    \end{pmatrix}, \quad
    \sigma_2 := 
    \begin{pmatrix}
	    0&-i\\
        i&0
    \end{pmatrix}, \quad
    \sigma_3 := 
    \begin{pmatrix}
	    1&0\\
        0&-1
    \end{pmatrix}
\end{equation}
are the Pauli matrices, and we are denoting $\sigma \cdot p:=\sigma_1 p_1+\sigma_2 p_2$. Given~$\theta\in(-\frac \pi 2,\frac {\pi}{2})$, the \textbf{quantum dot Dirac operator}~$\D_{\theta,m}$ is the operator in~$L^2(\Omega,\C^2)$ defined by
\begin{equation} \label{def:Dirac_op_theta}
	\begin{split}
		\mathrm{Dom}(\D_{\theta,m}) &:= \big\{ \varphi \in H^1(\Omega,\C^2): \, \varphi =(\cos\theta\,\sigma\cdot \tau+\sin\theta\, \sigma_3) \varphi  \,\text{ in } H^{1/2}(\partial \Omega,\C^2) \big\},\\
		\D_{\theta,m}\varphi &:= 
		(-i\sigma\cdot\nabla+m\sigma_3)\varphi \quad\text{for all 
			$\varphi\in\mathrm{Dom}(\D_{\theta,m})$}.
	\end{split}
\end{equation}
The infinite mass boundary condition corresponds to $\theta=0$. The zigzag boundary conditions arise from the self-adjoint realizations of~$\D_{\theta,m}$ in the limits $\theta\to \pm\frac{\pi}{2}$. In this case their spectra are fully described in terms of the Dirichlet Laplacian (which is well understood), and for this reason we do not consider them in this work; see~\cite{Schmidt1995}.

It is known that, for all~$\theta\in(-\frac \pi 2,\frac {\pi}{2})$ and all~$m\geq0$, the operator~$\D_{\theta,m}$ is self-adjoint in~$L^2(\Omega,\C^2)$ and its spectrum consists of isolated eigenvalues of finite multiplicity accumulating only at $\pm\infty$; see~\cite[paragraph before equation (1.2)]{DuranMasSanzPerela2026} and  also~\cite{Benguria2017Self}. Moreover, the negative eigenvalues of~$\D_{\theta,m}$ are characterized by the positive eigenvalues of~$\D_{-\theta,m}$; see the unitary equivalences discussed in~\cite[Appendix~A.2]{DuranMasSanzPerela2026} and~\cite[paragraph before equation~(1.3)]{DuranMasSanzPerela2026}. Therefore, to study the spectrum of $\D_{\theta,m}$ as $\theta$ ranges in $(-\frac \pi 2,\frac {\pi}{2})$ one can reduce to the eigenvalue problem
\begin{equation} \label{eq:QD_eigen}
    \begin{cases}
        -2i \partial_z v = (\lambda-m) u & \text{in } \Omega, \\
        -2i \partial_{\bar z} u = (\lambda+m)v & \text{in } \Omega, \\
        \overline \nu v = i \vartheta(\theta) u & \text{on } \partial \Omega,
    \end{cases}
\end{equation}
assuming that~$\lambda\geq 0$. Here, $\vartheta$ is the smooth, strictly decreasing, and bijective function 
\begin{equation} \label{def:vartheta}
    \begin{split}
        \vartheta: \textstyle{ (-\frac \pi 2, \frac \pi 2) } & \to (0,+\infty)\\
        \theta & \mapsto \textstyle{ \frac{1-\sin\theta}{\cos\theta} };
    \end{split}
\end{equation}
notice that \eqref{eq:QD_eigen} is the eigenvalue problem $\D_{\theta,m} \varphi = \lambda \varphi$ associated to~\eqref{def:Dirac_op_theta} when we write an eigenfunction~$\varphi$ in components, namely $\varphi=(u,v)^\intercal$ with $u,v:\Omega\to \C$. 

Among all the nonnegative eigenvalues of~\eqref{eq:QD_eigen}, in this work we are interested in the smallest one. For all~$\theta\in(-\frac \pi 2,\frac {\pi}{2})$ and all~$m\geq0$, we denote
\begin{equation}
    \lambda_\Omega(\theta,m) := \text{the first (smallest) nonnegative eigenvalue } \lambda \text{ of } \eqref{eq:QD_eigen}.
\end{equation}

\subsection{The $\overline\partial$-Robin Laplacian} \label{sec:dbarRobin}

Given $a>0$, the \textbf{$\overline\partial$-Robin Laplacian}~$\RR_a$ is the operator in~$L^2(\Omega,\C)$ defined by
\begin{equation} \label{eq:RodzinLaplacian}
    \begin{split}
        \Dom(\RR_a) & := \big\{u\in H^1(\Omega,\C): \, \partial_{\bar z} u \in H^1(\Omega,\C), \, 2\bar \nu \partial_{\bar z}u + au = 0 \text{ in } H^{1/2}(\partial \Omega,\C) \big\}, \\
        \RR_a u & := -\Delta u \quad \text{for all } u \in \Dom(\RR_a).
    \end{split}
\end{equation}
For all~$a>0$, the operator $\RR_a$ is self-adjoint in $L^2(\Omega,\C)$, and its spectrum consists of isolated and positive eigenvalues of finite multiplicity accumulating only at $+\infty$; see~\cite[Theorems~1.1 and~1.2]{Duran2026}. This means that the eigenvalue problem associated to~\eqref{eq:RodzinLaplacian}, namely
\begin{equation} \label{eq:Rodzin_eigen}
    \begin{cases}
        -\Delta u = \mu u & \text{in } \Omega, \\
        2 \overline \nu \partial_{\bar z} u + a u = 0 & \text{on } \partial \Omega,
    \end{cases}
\end{equation}
only admits positive eigenvalues~$\mu>0$. For all~$a>0$, we denote
\begin{equation}
    \mu_\Omega(a) := \text{the first (smallest) eigenvalue } \mu \text{ of } \eqref{eq:Rodzin_eigen}.
\end{equation}

We conclude this introductory section by discussing the~$C^2$ assumption on~$\Omega$.

\begin{remark} \label{rmk:Lipschitz}
    The techniques in this paper can be carried out if~$\Omega\subset \R^2$ is a bounded Lipschitz domain. In this case, however, it is a priori not ensured that~$\D_{\theta,m}$ and~$\RR_a$ are self-adjoint operators with discrete spectra; see~\cite{BenhellalPank2025,Pizzichillo2021} regarding Dirac operators and~\cite[Remark~1.11]{Duran2026} regarding~$\overline\partial$-Robin Laplacians. This is crucially used in this paper, on the one hand, when we implicitly assume that the eigenvalue problems~\eqref{eq:QD_eigen} and~\eqref{eq:Rodzin_eigen} have nonzero solutions and, on the other hand, when we consider their first (smallest) eigenvalues~$\lambda_\Omega(\theta,m)$ and~$\mu_\Omega(a)$, the latter given by the variational characterization in~\eqref{eq:RQ_Rodzin_mu}. These implicit assumptions are guaranteed whenever $\Omega$ is~$C^2$ regular, as shown in~\cite{Duran2026, DuranMasSanzPerela2026}.
\end{remark}

\section{Main results} \label{sec:main_results}

The main advantage of the connection studied in \cite{DuranMasSanzPerela2026} ---which relates $\lambda_\Omega(\theta,m)$ with $\mu_\Omega(a)$--- is that a simple variational characterization for~$\mu_\Omega(a)$ is available. More specifically, by~\cite[Theorem~1.2]{Duran2026} we have
\begin{equation} \label{eq:RQ_Rodzin_mu}
    \mu_\Omega(a) = \inf_{u\in E(\Omega)\setminus\{0\}} \dfrac{4\int_\Omega |\partial_{\bar z} u|^2 + a\int_{\partial\Omega} |u|^2}{\int_\Omega |u|^2},
\end{equation}
where $E(\Omega) := \{u\in L^2(\Omega,\C):\, \partial_{\bar z} u\in L^2(\Omega,\C) \text{ and }u\in L^2(\partial\Omega,\C)\}$.

Using \eqref{eq:RQ_Rodzin_mu}, one can show that the function~$a\mapsto \mu_\Omega(a)$ is continuous, strictly increasing, strictly concave, and bijective from~$(0,+\infty)$ to~$(0,\Lambda_\Omega)$, where we recall that~$\Lambda_\Omega$ is the first eigenvalue of the Dirichlet Laplacian in~$\Omega$; see~\cite[Theorem 1.3]{Duran2026}. Thanks to the connection studied in~\cite{DuranMasSanzPerela2026}, one can translate these properties and show that, for all~$m\geq 0$, the function~$\theta \mapsto \lambda_\Omega(\theta,m)$ is continuous, strictly decreasing, and bijective from~$(-\frac \pi 2,\frac {\pi}{2})$ to~$(m,\sqrt{\Lambda_\Omega+m^2})$; see~\cite[Proposition~3.8]{DuranMasSanzPerela2026}.

In this work we build on such a connection to obtain more properties of $\lambda_\Omega(\theta,m)$.

\subsection{Translation of bounds} \label{sec:translation}

The main result of this paper establishes a new way of interpreting the connection between~$\lambda_\Omega(\theta,m)$ and~$\mu_\Omega(a)$ with respect to what was done in~\cite{DuranMasSanzPerela2026}. The reader can find in~\Cref{fig:interpretation} a graphical interpretation of its statement, along which $\vartheta$ is the function defined in~\eqref{def:vartheta}, namely~$\theta  \mapsto \vartheta(\theta) = \frac{1-\sin\theta}{\cos\theta}$. 

\begin{lemma} \label{thm:recipe_bounds}
    Let $m\geq 0$ and $\theta\in (-\frac{\pi}{2},\frac{\pi}{2})$, and let~$\Omega\subset \R^2$ be a bounded domain with~$C^2$ boundary. Then, there exists a unique~$a>0$ such that
    \begin{equation}
        \mu_\Omega(a) = \left(\dfrac{a}{\vartheta(\theta)}-m\right)^2-m^2.
    \end{equation}
    Moreover, $a>0$ is the unique solution of the previous equation if and only if
    \begin{equation}
        \lambda_\Omega(\theta,m) = \dfrac{a}{\vartheta(\theta)}-m.
    \end{equation}
\end{lemma}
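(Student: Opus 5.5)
The plan is to link the two eigenvalue problems through the explicit substitution behind the connection in~\cite{DuranMasSanzPerela2026}, and then use the monotonicity and concavity of $\mu_\Omega$ to get uniqueness.

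\textbf{Step 1: existence and uniqueness of $a$.} Write $t:=\vartheta(\theta)>0$ and set
\begin{equation}
    g(a) := \Big(\frac{a}{t}-m\Big)^2-m^2 = \frac{a}{t}\Big(\frac{a}{t}-2m\Big),
\end{equation}
so that $g$ is a convex quadratic with $g(0)=0$. Consider $h(a):=\mu_\Omega(a)-g(a)$ on $(0,+\infty)$. It is continuous and strictly concave, since $\mu_\Omega$ is strictly concave and $g$ is convex.

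As $a\to0^+$ we have $\mu_\Omega(a)\to0$ and $g(a)\to0$, so $h(a)\to0$. In fact $h>0$ near $0$: if $m>0$, then $g<0$ on $(0,2mt)$ while $\mu_\Omega>0$. If $m=0$, then $g(a)=a^2/t^2=o(a)$, while concavity with $\mu_\Omega(0^+)=0$ gives $\mu_\Omega(a)\ge c\,a$ for small $a$.

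As $a\to\infty$, $\mu_\Omega$ stays below $\Lambda_\Omega$ while $g\to\infty$, so $h\to-\infty$. A strictly concave function with $h(0^+)=0$, positive near $0$ and tending to $-\infty$ has exactly one zero on $(0,\infty)$. Indeed, if $h(a_1)=h(a_2)=0$ with $a_1<a_2$, strict concavity on $[\epsilon,a_2]$ with $h(\epsilon)>0$ forces $h(a_1)>0$, a contradiction. This gives existence and uniqueness. Moreover, the solution satisfies $a/t-m>0$: if $m>0$ we get $a>2mt$ because $g\le0$ on $(0,2mt]$ and $h>0$ there; if $m=0$ it is immediate.

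\textbf{Step 2: the correspondence of eigenvalues.} I would reproduce the dictionary of~\cite{DuranMasSanzPerela2026}. Let $(u,v)$ solve~\eqref{eq:QD_eigen} with $\lambda>m$, which holds since $\lambda_\Omega(\theta,m)\in(m,\sqrt{\Lambda_\Omega+m^2})$. From the two interior equations we get $v=\frac{-2i}{\lambda+m}\partial_{\bar z}u$, and then
\begin{equation}
    -\Delta u=-4\partial_z\partial_{\bar z}u=(\lambda^2-m^2)u .
\end{equation}
Substituting $v$ into the boundary condition gives $\bar\nu\frac{-2i}{\lambda+m}\partial_{\bar z}u=i t u$, that is,
\begin{equation}
    2\bar\nu\partial_{\bar z}u+t(\lambda+m)u=0 .
\end{equation}
So $u$ is an eigenfunction of $\RR_a$ with $a=t(\lambda+m)$ and eigenvalue $\mu=\lambda^2-m^2=(a/t-m)^2-m^2$. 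Here $u\neq0$, since otherwise $v=0$ too.

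Conversely, an eigenfunction $u$ of $\RR_a$ with eigenvalue $\mu=g(a)$ and $a/t-m>0$ produces, with $\lambda:=a/t-m$ and $v:=\frac{-2i}{\lambda+m}\partial_{\bar z}u$, a solution of~\eqref{eq:QD_eigen}. The regularity $v\in H^1$ follows from $\partial_{\bar z}u\in H^1$ in $\Dom(\RR_a)$. Hence $\lambda>m$ is an eigenvalue of $\D_{\theta,m}$ if and only if $\lambda^2-m^2$ is an eigenvalue of $\RR_{t(\lambda+m)}$.

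\textbf{Step 3: matching first eigenvalues (the main obstacle).} Set $\lambda_0:=\lambda_\Omega(\theta,m)$ and $a_0:=t(\lambda_0+m)$. By Step 2, $\lambda_0^2-m^2$ is some eigenvalue of $\RR_{a_0}$, so $\mu_\Omega(a_0)\le g(a_0)$, i.e.\ $h(a_0)\le0$. I need equality.

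Suppose $h(a_0)<0$. Since $h>0$ near $0^+$, continuity gives some $a_1\in(0,a_0)$ with $h(a_1)=0$. The bound $a_1>2mt$ from Step 1 then gives $\lambda_1:=a_1/t-m>0$, and in fact $\lambda_1>m$ because $a_1>2mt$. Now $\mu_\Omega(a_1)=g(a_1)$ is an eigenvalue of $\RR_{a_1}$, so by Step 2 $\lambda_1$ is an eigenvalue of $\D_{\theta,m}$, and $\lambda_1<\lambda_0$. This contradicts the minimality of $\lambda_0$. Therefore $h(a_0)=0$, so $a_0$ is the unique solution from Step 1, and $\lambda_0=a_0/t-m$.

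For the converse direction of the "if and only if": if $\lambda_\Omega(\theta,m)=a/t-m$, then $a=a_0$, which is the unique solution. The delicate points are the positivity of $h$ near $0$ when $m=0$, and ensuring that $\lambda_1$ lies in the range $\lambda>m$ where the Step 2 dictionary is valid. Both are handled by the concavity of $\mu_\Omega$ and the sign analysis of $g$.
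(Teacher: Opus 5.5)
Your proof is correct, and in its key step it takes a different route from the paper. Step~1 is the paper's argument: the sign of $\mu_\Omega-g$ near $0$ and near $\infty$, plus strict concavity. There is one small improvement. For $m=0$ you obtain positivity near $0$ from concavity alone, via the chord inequality $\mu_\Omega(a)\ge \frac{\mu_\Omega(a_1)}{a_1}\,a$ for $a\le a_1$. The paper instead cites the positivity of the slope $S_\Omega$ from \cite[Proposition~2.6]{DuranMasSanzPerela2026}.

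The real difference is in Steps~2--3. The paper takes \Cref{prop:firstTOfirst} (first eigenvalues correspond under $T_m$) as a black box. It then only checks that $T_m^{-1}(a_\star,\mu_\Omega(a_\star))=(\theta,a_\star/\vartheta(\theta)-m)$.

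You use only the eigenfunction-level dictionary and prove the first-to-first matching directly. Since $g(a_0)$ is \emph{some} eigenvalue of $\RR_{a_0}$, you get $g(a_0)\ge\mu_\Omega(a_0)$. If the inequality were strict, the intermediate value theorem would give an earlier crossing $a_1<a_0$ of $\mu_\Omega$ with the parabola, hence a smaller Dirac eigenvalue. Together with uniqueness of the crossing, this actually reproves \Cref{prop:firstTOfirst} at fixed $\theta$. It also resolves exactly the difficulty the paper raises before that proposition, namely that distinct Dirac eigenvalues at fixed $\theta$ land at distinct values of $a$. Along the parabola, the lowest branch of the $\overline\partial$-Robin spectrum is met first.

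The paper's version is shorter given the literature. Yours is self-contained: it needs only that $\mu_\Omega(a)$ is an attained eigenvalue, continuous and strictly concave in $a$ with values in $(0,\Lambda_\Omega)$, plus the fact $\lambda_\Omega(\theta,m)>m$.

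For that last fact you cite \cite[Proposition~3.8]{DuranMasSanzPerela2026}. The paper describes that result as obtained by translating properties through the very connection you are bypassing. If you want full independence, it can be checked directly:
\begin{itemize}
\item For $m>0$: the forward computation in your Step~2 only uses $\lambda+m>0$. An eigenvalue $\lambda\in[0,m]$ would then give an eigenvalue $\lambda^2-m^2\le 0$ of $\RR_{\vartheta(\theta)(\lambda+m)}$, which is impossible.
\item For $m=0$: a zero mode is excluded by integrating $\partial_{\bar z}(u\bar v)=0$ over $\Omega$. With $v=i\vartheta(\theta)\nu u$ on $\partial\Omega$, this yields $\vartheta(\theta)\int_{\partial\Omega}|u|^2=0$. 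Holomorphy of $u$ and antiholomorphy of $v$ then force $u=v=0$.
\end{itemize}
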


\begin{figure}[ht]
	\begin{tikzpicture}
        
		\draw[->, thick] (-0.4,0) -- (14.7,0) node[below right] {$a$};
		\draw[->, thick] (0,-2.5) -- (0,7);

		\draw[line width=1, domain=0:13.3, smooth, variable=\x, CBorange] plot ({\x}, {\x/(\x/5.78+1/2}) node[right] {$\mu_\Omega(a)$};
        \node[below left] at (0,0) {$0$};

		\draw[line width=0.8, domain=0:9, smooth, variable=\x, blue] 
		plot ({\x}, {(\x/2.414-1)*(\x/2.414-1)-1}) node[right] {$p_{\theta,m}(a)= \left(\dfrac{a}{\vartheta(\theta)}-m\right)^2-m^2$};

        \draw[dashed, gray, line width=1] (7.939,0) -- (7.939,{(7.939/2.414-1)*(7.939/2.414-1)-1});
        \node[below] at (7.939,0) {$a_\star$};
        \fill (7.939,{(7.939/2.414-1)*(7.939/2.414-1)-1}) circle (0.09);

        \draw[dashed, gray, line width=1] (5.7,0) -- (5.7,{(5.7/2.414-1)*(5.7/2.414-1)-1});
        \node[below] at (5.7,0) {$a_1$};

        \draw[dashed, gray, line width=1] (10.25,0) -- (10.25, {10.25/(10.25/5.78+1/2});
        \node[below] at (10.25,0) {$a_2$};

        \node at (8.4,-1.7) {$\begin{matrix}
                \dfrac{a_1}{\vartheta(\theta)}-m < \dfrac{a_\star}{\vartheta(\theta)}-m < \dfrac{a_2}{\vartheta(\theta)}-m \\
				\rotatebox{90}{$=$} \\
				 \lambda_\Omega(\theta,m)
            \end{matrix}$};
		
	\end{tikzpicture}
	\caption{Graphical interpretation of~\Cref{thm:recipe_bounds} for $m\geq 0$ and $\theta\in (-\frac{\pi}{2},\frac{\pi}{2})$ given.}
	\label{fig:interpretation}
\end{figure}

The rest of the results of this paper are consequences of~\Cref{thm:recipe_bounds}. It is henceforth worth discussing the meaning and implications of its statement, which we do with the aid of~\Cref{fig:interpretation}. First, one can see~\Cref{thm:recipe_bounds} as a result giving a characterization for~$\lambda_\Omega(\theta,m)$. Indeed, the statement asserts that this eigenvalue is given in terms of the unique intersection between the curves
\begin{equation}
    0<a\mapsto \mu_\Omega(a) \quad \text{and} \quad 0<a\mapsto p_{\theta,m}(a):= \left(\dfrac{a}{\vartheta(\theta)}-m\right)^2-m^2;
\end{equation}
see~$a_\star>0$ in~\Cref{fig:interpretation}. Taking into account the variational characterization~\eqref{eq:RQ_Rodzin_mu} for $\mu_\Omega(a)$, this can be seen as a variational characterization for $\lambda_\Omega(\theta,m)$; in~\Cref{rmk:Comparison_Anutnes} we compare this interpretation with the variational characterization obtained in~\cite{Antunes2021} in the case $\theta=0$ and $m=0$. Moreover,~\Cref{fig:interpretation} and the concavity and convexity of the curves suggest a way of finding bounds for~$\lambda_\Omega(\theta,m)$. Indeed, according to~\Cref{fig:interpretation}, the values $a>0$ for which~$\mu_\Omega(a) > p_{\theta,m}(a)$ seem to give lower bounds for~$\lambda_\Omega(\theta,m)$ ---see~$a_1$ in~\Cref{fig:interpretation}---, and the values~$a>0$ for which~$\mu_\Omega(a) < p_{\theta,m}(a)$ seem to give upper bounds for~$\lambda_\Omega(\theta,m)$ ---see~$a_2$ in~\Cref{fig:interpretation}.

The previous comments hint that bounds for $\mu_\Omega(a)$ can be transferred to bounds for $\lambda_\Omega(\theta,m)$; and vice versa. This is formalized in the following result, which readily follows from~\Cref{thm:recipe_bounds}. It establishes the precise bounds for~$\mu_\Omega(a)$ needed to ensure that a desired quantity~$\mathcal B>0$ is either an upper or a lower bound for $\lambda_\Omega(\theta,m)$; and vice versa.

\begin{theorem} \label{cor:translate_bounds}
    Let $\Omega\subset \R^2$ be a bounded domain with $C^2$ boundary. Given $m\geq 0$ and~$\mathcal B>0$, let~$a>0$ and~$\theta\in(-\frac \pi 2, \frac \pi 2)$ be related by
    \begin{equation}
        a = \vartheta(\theta) \big( \mathcal B + m \big) \quad \text{or, equivalently,} \quad \theta = \vartheta^{-1} \left( \dfrac{a}{\mathcal B +m} \right).
    \end{equation}
    Then, the following statements hold:
    \begin{enumerate}[label=$(\roman*)$]
        \item $\lambda_\Omega(\theta,m) = \mathcal B$ if and only if $\mu_\Omega(a) = \mathcal B^2-m^2$.
        \item $\lambda_\Omega(\theta,m) > \mathcal B$ if and only if $\mu_\Omega(a) > \mathcal B^2-m^2$.
        \item $\lambda_\Omega(\theta,m) < \mathcal B$ if and only if $\mu_\Omega(a) < \mathcal B^2-m^2$.
    \end{enumerate}
\end{theorem}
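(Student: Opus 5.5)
Fix $m\ge0$, $\mathcal B>0$, and $a,\theta$ related by $a=\vartheta(\theta)(\mathcal B+m)$; since $\vartheta$ is a bijection onto $(0,+\infty)$, this is a bijective correspondence between $a>0$ and $\theta$. The plan is to recast everything in terms of the two curves of \Cref{thm:recipe_bounds}, namely $a\mapsto\mu_\Omega(a)$ and $p_{\theta,m}(a)=\big(\tfrac{a}{\vartheta(\theta)}-m\big)^2-m^2$, with $\theta$ now fixed. By construction $p_{\theta,m}(a)=\mathcal B^2-m^2$ at our chosen $a$.

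\emph{Step 1: the sign of $\mu_\Omega-p_{\theta,m}$.} On $(0,+\infty)$, write $a_0:=m\vartheta(\theta)$. For $a\le 2a_0$ we have $p_{\theta,m}(a)\le 0<\mu_\Omega(a)$. On $[a_0,+\infty)$ the function $p_{\theta,m}$ is strictly increasing and convex, and $\mu_\Omega$ is strictly concave. Hence $f:=\mu_\Omega-p_{\theta,m}$ is strictly concave there and positive at $a_0$. It also tends to $-\infty$, since $\mu_\Omega<\Lambda_\Omega$ while $p_{\theta,m}\to+\infty$. A strictly concave function that is positive at one point and tends to $-\infty$ has exactly one zero to the right of that point, with $f>0$ before it and $f<0$ after it. Combined with $f>0$ on $(0,a_0]$, we get a unique zero $a_\star$ on $(0,+\infty)$, with $f>0$ on $(0,a_\star)$ and $f<0$ on $(a_\star,+\infty)$. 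This is the picture in \Cref{fig:interpretation}, and the uniqueness is consistent with \Cref{thm:recipe_bounds}.

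\emph{Step 2: identification.} By \Cref{thm:recipe_bounds}, $\lambda_\Omega(\theta,m)=a_\star/\vartheta(\theta)-m$. The map $a\mapsto a/\vartheta(\theta)-m$ is strictly increasing, and at our $a$ it equals $\mathcal B$. Therefore $\lambda_\Omega(\theta,m)>\mathcal B$ iff $a_\star>a$, $\lambda_\Omega(\theta,m)=\mathcal B$ iff $a_\star=a$, and $\lambda_\Omega(\theta,m)<\mathcal B$ iff $a_\star<a$.

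\emph{Step 3: conclusion.} By Step 1, $a<a_\star$ iff $f(a)>0$, $a=a_\star$ iff $f(a)=0$, and $a>a_\star$ iff $f(a)<0$. Since $p_{\theta,m}(a)=\mathcal B^2-m^2$, these conditions read $\mu_\Omega(a)>\mathcal B^2-m^2$, $\mu_\Omega(a)=\mathcal B^2-m^2$, and $\mu_\Omega(a)<\mathcal B^2-m^2$ respectively. This gives (ii), (i) and (iii).

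The only delicate point is the sign structure in Step 1. The key is to split at the vertex $a_0$, where $p_{\theta,m}$ stops being monotone, and to use that $p_{\theta,m}\le0$ on $(0,2a_0]$ while $\mu_\Omega>0$. When $m=0$ this splitting is unnecessary, since $a_0=0$ and $f$ is strictly concave on all of $(0,+\infty)$.
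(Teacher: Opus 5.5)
Your three-step structure is the same as the paper's. You determine the sign of $\mu_\Omega-p_{\theta,m}$ on either side of the unique crossing $a_\star$, identify $a_\star=\vartheta(\theta)\big(\lambda_\Omega(\theta,m)+m\big)$ via \Cref{thm:recipe_bounds}, and evaluate at the $a$ where $p_{\theta,m}(a)=\mathcal B^2-m^2$. Steps 2 and 3 are fine. For $m>0$, Step 1 is also correct. It is slightly more self-contained than the paper's argument, because $p_{\theta,m}\le 0<\mu_\Omega$ on $(0,2a_0]$ gives you a point where $f>0$ without any information on the slope of $\mu_\Omega$ at the origin.

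The gap is the massless case $m=0$, which the statement includes. There $a_0=0$ is not in the domain, $f(a)\to0$ as $a\searrow0$, and Step 1 never produces a point where $f>0$. Your closing remark does not repair this. Strict concavity on $(0,+\infty)$ together with $f\to-\infty$ is also satisfied by $-a^2$, which is never positive. Note also that $f$ is strictly concave on all of $(0,+\infty)$ for every $m$, since $p_{\theta,m}$ is a convex quadratic. So the split at the vertex supplies a positivity region, not concavity.

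For $m=0$ you need that $\mu_\Omega$ vanishes linearly at $0$, while $p_{\theta,0}(a)=a^2/\vartheta(\theta)^2$ vanishes quadratically. The paper gets this from $S_\Omega=\lim_{a\searrow0}\mu_\Omega(a)/a>0$. You can also derive it from concavity and positivity of $\mu_\Omega$:
\begin{itemize}
\item For $0<\varepsilon<a<1$, write $a$ as a convex combination of $\varepsilon$ and $1$.
\item Apply concavity, drop the positive term involving $\mu_\Omega(\varepsilon)$, and let $\varepsilon\searrow0$. This gives $\mu_\Omega(a)\ge a\,\mu_\Omega(1)$.
\item Hence $f>0$ on $\big(0,\min\{1,\vartheta(\theta)^2\mu_\Omega(1)\}\big)$.
\end{itemize}
Running your concavity argument from any point of this interval then yields the required sign pattern and completes the proof.
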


The variational characterization~\eqref{eq:RQ_Rodzin_mu} is a powerful tool for obtaining bounds for~$\mu_\Omega(a)$. We want to bring to the attention of the reader that there is no simple analog of this for the quantum dot Dirac operators, but~\Cref{cor:translate_bounds} gives a recipe for achieving it. Obtaining bounds for~$\mu_\Omega(a)$ and transferring them to bounds for~$\lambda_\Omega(\theta,m)$ according to~\Cref{cor:translate_bounds} is the idea on which~\Cref{sec:bounds} is based.

This idea might also be useful for proving Faber-Krahn or related inequalities for $\lambda_\Omega(\theta,m)$; see, for example,~\cite[Problem~5.1]{ProblemListShapeOptimization},~\cite[Conjecture~1]{Antunes2021},~\cite[questions (Q1),~(Q1'),~(Q2),~(Q2')]{Antunes2024},~\cite[Conjectures~3.17,~4.2, and~5.1]{Antunes2024} ,~\cite[Conjecture~2]{Briet2022}, and~\cite[Conjecture~1.1]{DuranMasSanzPerela2026}. Actually,~\Cref{cor:FK_all,cor:FK_some} below give an insight to this; see also~\Cref{cor:Pointwise_Equivalence,cor:Equivalence_Conjectures}.

Let us finally announce that the same idea is used in the forthcoming paper~\cite{DuranMasSanzPerela2026PW}, with the goal of proving a sharp upper bound for $\lambda_\Omega(\theta,m)$ depending only on the isoperimetric deficit of~$\Omega$; namely, a Payne-Weinberger type inequality for $\lambda_\Omega(\theta,m)$. 

\begin{remark} \label{rmk:Comparison_Anutnes}
	The problem for $\theta=0=m$ was studied in~\cite{Antunes2021} for domains with $C^\infty$ boundary, and was one of the main inspirations of the authors in \cite{DuranMasSanzPerela2026}. We revisit~\cite[Remark~3.5]{DuranMasSanzPerela2026}, explaining the accordance of~\Cref{thm:recipe_bounds} and~\Cref{cor:translate_bounds} with the results in~\cite{Antunes2021}. On the one hand, we recall that~\cite[Theorem~4]{Antunes2021} states that $\lambda_\Omega(0,0)$ is the (unique) $\lambda>0$ such that $\mathcal P(\lambda)=0$, where
	\begin{equation} \label{eq:AntunesFormulation}
		\mathcal{P}_\Omega(\lambda) := \inf_{u\in E(\Omega)\setminus\{0\}}\dfrac{4\int_\Omega |\partial_{\bar z} u|^2 + \lambda \int_{\partial\Omega} |u|^2 - \lambda^2 \int_\Omega |u|^2}{\int_\Omega |u|^2}.
	\end{equation}
    This formulation coincides with~\Cref{thm:recipe_bounds}, which asserts that $\lambda_\Omega(0,0)$ is the (unique)~$a>0$ solving~$\mu_\Omega(a) = a^2$. Indeed, noting that
    \begin{equation} \label{eq:AntunesCompare}
		\mu_\Omega(a) = \inf_{u\in E(\Omega)\setminus\{0\}}\dfrac{4\int_\Omega |\partial_{\bar z} u|^2 + a \int_{\partial\Omega} |u|^2}{\int_\Omega |u|^2} = \mathcal P_\Omega(a)+a^2,
	\end{equation}
    we have the same conclusion. On the other hand, we notice that as a combination of~\cite[Theorem~4 and Proposition~33]{Antunes2021} one can deduce that if $\lambda>0$ is such that $\mathcal P_\Omega(\lambda) < 0$ then~$\lambda_\Omega(0,0) < \lambda$; and similarly reversing the inequalities. In view of \eqref{eq:AntunesFormulation} and \eqref{eq:AntunesCompare}, this formulation is in agreement with~\Cref{cor:translate_bounds}. 
    
    We discuss now an advantage of~\Cref{cor:translate_bounds} with respect to the study in~\cite{Antunes2021} for $\theta=0$ and $m=0$. Using \eqref{eq:AntunesFormulation}, in~\cite[Section~7]{Antunes2021} it is observed that the Faber-Krahn inequality $\lambda_\Omega(0,0) \geq \lambda_\disk(0,0)$ holds true provided that $\mathcal P_\Omega(\lambda) \geq \mathcal P_\disk(\lambda)$ for all $\lambda>0$; here~$\Omega$ is a domain with area $\pi$ and~$\disk$ is the unit disk. In view of~\eqref{eq:AntunesFormulation} and \eqref{eq:AntunesCompare}, this is the same as asserting that the inequality~$\mu_\Omega(a) > \mu_\disk(a)$ for all $a>0$ ensures the inequality $\lambda_\Omega(\theta,0) \geq \lambda_\disk(\theta,0)$ for~$\theta = 0$. Instead, by~\Cref{cor:translate_bounds} the inequality $\mu_\Omega(a) > \mu_\disk(a)$ for~$a=\lambda_\disk(0,0)$ ensures the inequality $\lambda_\Omega(\theta,0) \geq \lambda_\disk(\theta,0)$ for~$\theta = 0$.
\end{remark}

\subsection{Geometric upper and lower bounds} \label{sec:bounds}

The main result in this section presents new upper and lower bounds for $\lambda_\Omega(\theta,m)$, for all $\theta\in (-\frac{\pi}{2},\frac{\pi}{2})$ and all $m\geq 0$; see~\Cref{thm:geometric_bounds}. In~\Cref{rmk:Comparison_Benguria} we compare our new lower bound with the one already available in the literature; see~\cite[Theorem~1]{Benguria2017Spectral}. The quantities on which our bounds depend are the perimeter~$|\partial\Omega|$, the first eigenvalue of the Dirichlet Laplacian $\Lambda_\Omega$, and
\begin{equation} \label{def:qOmega}
    q_\Omega := \inf_{h\in \mathcal H(\Omega)\setminus\{0\}} \dfrac{\int_{\partial\Omega} |h|^2}{\int_\Omega |h|^2},
\end{equation}
where
\begin{equation}
    \mathcal H(\Omega) := \text{the completion of } \big\{ v\in C^2(\overline \Omega) : \Delta v = 0 \text{ in } \Omega \big\} \text{ with respect to the norm } \|\cdot\|_{L^2(\partial\Omega)}.
\end{equation}
Before proceeding further let us comment that, by~\cite[Theorem~2]{Bucur2009}, whenever $\Omega$ is Lipschitz and satisfies the uniform outer ball condition ---in particular, whenever~$\Omega$ is~$C^2$---,~$q_\Omega$ is well-defined, positive, and coincides with the first eigenvalue $q$ of the fourth order Steklov problem
\begin{equation}
    \begin{cases}
        \Delta^2u = 0 & \text{in } \Omega, \\
        u = 0 & \text{on } \partial\Omega, \\
        \Delta u - q \partial_\nu u = 0 & \text{on } \partial\Omega.
    \end{cases}
\end{equation}

\begin{theorem} \label{thm:geometric_bounds}
    Let $\Omega\subset \R^2$ be a bounded domain with $C^2$ boundary, and let~$q_\Omega$ be as in~\eqref{def:qOmega}. The following statements hold:
    \begin{enumerate}[label=$(\roman*)$]
        \item For all $a>0$, 
            \begin{equation} \label{eq:Boundsdbar}
                \dfrac{\Lambda_\Omega}{1+\dfrac{\Lambda_\Omega}{q_\Omega} \dfrac{1}{a}} < \mu_\Omega(a) < \dfrac{\Lambda_\Omega}{1+\dfrac{4\pi}{|\partial\Omega|} \dfrac{1}{a}}.
            \end{equation}
        \item For all $\theta\in (-\frac \pi 2, \frac \pi 2)$ and all $m\geq 0$,
            \begin{equation} \label{eq:BoundsQD}
               \mathcal B^{\theta,m}_{\Omega} \left( \dfrac{\Lambda_\Omega}{2q_\Omega} \right) < \lambda_\Omega(\theta,m) < \mathcal B^{\theta,m}_{\Omega} \left(\dfrac{2\pi}{|\partial\Omega|} \right).
            \end{equation}
            Here, $(0,+\infty)\ni \xi \mapsto \mathcal B^{\theta,m}_{\Omega}(\xi)$ is the positive and strictly decreasing function defined by
            \begin{equation}
               \mathcal B^{\theta,m}_{\Omega}(\xi):= - \dfrac{\xi}{\vartheta(\theta)} + \sqrt{\left( \dfrac{\xi}{\vartheta(\theta)} + m \right)^2+\Lambda_\Omega}.
            \end{equation}
    \end{enumerate}
\end{theorem}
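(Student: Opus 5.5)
The plan is to prove $(i)$ with the variational characterization~\eqref{eq:RQ_Rodzin_mu}: the lower bound by decomposing a minimizer, and the upper bound by a test function built from the first Dirichlet eigenfunction. Part $(ii)$ will then follow from $(i)$ through \Cref{cor:translate_bounds}.

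\emph{Lower bound in $(i)$.} Let $u$ be an eigenfunction for $\mu_\Omega(a)$. Set $D:=4\int_\Omega|\partial_{\bar z}u|^2$ and $B:=\int_{\partial\Omega}|u|^2$, so that $\mu_\Omega(a)\int_\Omega|u|^2=D+aB$.
\begin{enumerate}[label=$(\alph*)$]
    \item Split $u=w+h$, where $h$ is the harmonic extension of $u|_{\partial\Omega}$ and $w\in H^1_0(\Omega)$. Since $h\in\mathcal H(\Omega)$, the definition~\eqref{def:qOmega} gives $\int_\Omega|h|^2\le B/q_\Omega$.
    \item Because $h$ is harmonic, $4\partial_z\partial_{\bar z}w=\Delta u$. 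Integrating by parts with $w=0$ on $\partial\Omega$ gives
    \begin{equation}
        \int_\Omega|\nabla w|^2=4\int_\Omega\overline{\partial_{\bar z}w}\,\partial_{\bar z}u\le 4\|\partial_{\bar z}w\|\,\|\partial_{\bar z}u\|.
    \end{equation}
    \item For $w\in H^1_0$ one has $4\|\partial_{\bar z}w\|^2=\|\nabla w\|^2$. Combined with $(b)$ this gives $\|\nabla w\|^2\le D$, hence $\Lambda_\Omega\int_\Omega|w|^2\le D$.
    \item The triangle inequality and Cauchy--Schwarz then yield
    \begin{equation}
        \|u\|^2\le\Big(\sqrt{D/\Lambda_\Omega}+\sqrt{B/q_\Omega}\Big)^2\le (D+aB)\Big(\tfrac1{\Lambda_\Omega}+\tfrac1{a q_\Omega}\Big).
    \end{equation}
    This is exactly $\mu_\Omega(a)\ge\Lambda_\Omega/(1+\Lambda_\Omega/(q_\Omega a))$.
    \item For strictness, equality would force all of the following simultaneously: $w$ a Dirichlet eigenfunction, $h$ a minimizer for $q_\Omega$, $w$ and $h$ parallel in $L^2$, and $\partial_{\bar z}w$ parallel to $\partial_{\bar z}u$. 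I expect these to be incompatible, since $w\in H^1_0$ and $h$ harmonic with $h\neq0$ cannot be parallel.
\end{enumerate}

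\emph{Upper bound in $(i)$.} Let $\varphi>0$ be the real first Dirichlet eigenfunction, and test~\eqref{eq:RQ_Rodzin_mu} with $u=\varphi+c$, $c>0$. Constants are holomorphic, so $4\int_\Omega|\partial_{\bar z}u|^2=\int_\Omega|\nabla\varphi|^2=\Lambda_\Omega A$, where $A:=\int_\Omega\varphi^2$. With $I:=\int_\Omega\varphi$ and $L:=|\partial\Omega|$ this gives
\begin{equation}
    \mu_\Omega(a)\le\frac{\Lambda_\Omega A+aLc^2}{A+2cI+c^2|\Omega|}<\frac{\Lambda_\Omega A+aLc^2}{A+2cI}.
\end{equation}
Choose $c=\Lambda_\Omega I/(aL)$, which is where the right-hand side is minimized. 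Then numerator and denominator share the factor $A+\Lambda_\Omega I^2/(aL)$, and the quotient equals $\Lambda_\Omega/(1+\Lambda_\Omega I^2/(aLA))$. The key geometric input, and the step I regard as the main obstacle, is the Payne--Rayner reverse H\"older inequality for the first Dirichlet eigenfunction in the plane:
\begin{equation}
    \Big(\int_\Omega\varphi\Big)^2\ge\frac{4\pi}{\Lambda_\Omega}\int_\Omega\varphi^2.
\end{equation}
It gives $\Lambda_\Omega I^2/A\ge4\pi$, hence the upper bound in~\eqref{eq:Boundsdbar}. Strictness comes from the dropped term $c^2|\Omega|>0$.

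\emph{Part $(ii)$.} Fix $\theta,m$ and, for $\mathcal B>0$, put $a=\vartheta(\theta)(\mathcal B+m)$, as in \Cref{cor:translate_bounds}. By that theorem, $\lambda_\Omega(\theta,m)<\mathcal B$ holds if and only if $\mu_\Omega(a)<\mathcal B^2-m^2$. By $(i)$ it therefore suffices that
\begin{equation}
    \frac{\Lambda_\Omega}{1+2\xi/a}\le\mathcal B^2-m^2\quad\text{with } \xi=\tfrac{2\pi}{|\partial\Omega|}.
\end{equation}
Equivalently, multiplying by $1+2\xi/a$ and simplifying,
\begin{equation}
    \Lambda_\Omega\le(\mathcal B+m)\Big(\mathcal B-m+\tfrac{2\xi}{\vartheta(\theta)}\Big)=\Big(\mathcal B+\tfrac{\xi}{\vartheta(\theta)}\Big)^2-\Big(\tfrac{\xi}{\vartheta(\theta)}+m\Big)^2,
\end{equation}
that is, $\mathcal B\ge\mathcal B^{\theta,m}_\Omega(\xi)$. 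Taking $\mathcal B=\mathcal B^{\theta,m}_\Omega(\xi)$ gives the upper bound in~\eqref{eq:BoundsQD}.

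The lower bound is symmetric, using the lower bound in $(i)$ with $\xi=\Lambda_\Omega/(2q_\Omega)$, so that $\Lambda_\Omega/q_\Omega=2\xi$. The strict inequalities in $(i)$ carry over to strict inequalities in $(ii)$. Finally, positivity of $\mathcal B^{\theta,m}_\Omega$ is immediate from $\Lambda_\Omega>0$. Monotonicity follows from differentiating in $\xi$: the derivative of the square root is $\frac1{\vartheta(\theta)}(\xi/\vartheta(\theta)+m)/\sqrt{(\xi/\vartheta(\theta)+m)^2+\Lambda_\Omega}<\frac1{\vartheta(\theta)}$, so $\mathcal B^{\theta,m}_\Omega$ is strictly decreasing.
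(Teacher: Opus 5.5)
\textbf{The gap is in the upper bound of $(i)$.} After you discard $c^2|\Omega|$ and set $c=\Lambda_\Omega I/(aL)$, put $X:=\Lambda_\Omega I^2/(aL)$. The numerator becomes $\Lambda_\Omega(A+X)$ and the denominator $A+2X$. They share no factor, and this $c$ does not minimize the truncated quotient.

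In fact, with $t:=X/A$, the minimum over $c>0$ of $(\Lambda_\Omega A+aLc^2)/(A+2cI)$ is $2\Lambda_\Omega/(1+\sqrt{1+4t})$. This is strictly larger than $\Lambda_\Omega/(1+t)$, since $(1+2t)^2>1+4t$. For a disk, Payne--Rayner is an equality, so $t=4\pi/(aL)$. Hence for every $a>0$ no choice of $c$ makes the truncated quotient reach the claimed bound. The term $c^2|\Omega|$ cannot be thrown away.

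\textbf{Repair.} Bound the term below instead of by zero. Since $\varphi$ is not constant, Cauchy--Schwarz gives $|\Omega|A>I^2$ strictly, hence
\begin{equation}
    A+2cI+c^2|\Omega| > \frac{(A+cI)^2}{A}.
\end{equation}
For $A(\Lambda_\Omega A+aLc^2)/(A+cI)^2$, the critical point is exactly $c=\Lambda_\Omega I/(aL)$. There the numerator and denominator do factor, and the value is $\Lambda_\Omega/(1+t)$. Payne--Rayner, $t\ge 4\pi/(aL)$, then gives the upper bound, with strictness coming from the strict Cauchy--Schwarz inequality. Repaired this way, your argument is essentially Sperb's test-function proof run directly in $E(\Omega)$. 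The paper instead notes $\mu_\Omega(a)\le\mu_\Omega^{\mathrm{Rob}}(a)$, because real $H^1$ functions are admissible and satisfy $4|\partial_{\bar z}u|^2=|\nabla u|^2$, and then quotes Sperb's Robin bound. Yours is self-contained modulo Payne--Rayner; the paper's is a one-line reduction to the literature.

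\textbf{The rest matches the paper.} Your lower bound in $(i)$ uses the same decomposition $u=h+w$, triangle inequality and Cauchy--Schwarz. Your derivation of $\|\nabla w\|^2\le D$ is an equivalent substitute for the paper's orthogonality identity. For strictness via the equality case of the triangle inequality, state that $h\neq0$ (otherwise $u\in H^1_0$ forces $\mu_\Omega(a)\ge\Lambda_\Omega$) and $w\neq0$ (otherwise $u$ is harmonic and $\mu_\Omega(a)u=0$). Then parallelism would make $w$ harmonic with zero trace, a contradiction. In $(ii)$, the middle product should read $(\mathcal B-m)\big(\mathcal B+m+2\xi/\vartheta(\theta)\big)$; your final expression and the translation via \Cref{cor:translate_bounds} are correct.
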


The same type of geometric upper and lower bounds of \Cref{thm:geometric_bounds}~$(i)$ were obtained for the first eigenvalue of the Robin Laplacian with boundary parameter~$a>0$ in~\cite[equations~(11') and~(40')]{Sperb1972}. As shall be seen in \Cref{sec:proof_bounds}, we obtain \Cref{thm:geometric_bounds}~$(i)$ adapting the arguments in~\cite{Sperb1972} to the variational characterization~\eqref{eq:RQ_Rodzin_mu} of $\mu_\Omega(a)$. Then, \Cref{thm:geometric_bounds}~$(ii)$ is obtained as a consequence of~\Cref{cor:translate_bounds}.

We remark that the bounds in  \Cref{thm:geometric_bounds}~$(i)$ are in agreement with the fact that~$\mu_\Omega(a) \in (0,\Lambda_\Omega)$ for all $a>0$. Indeed, by monotonicity we have
\begin{equation}
    0 = \underset{\xi \searrow 0}{\lim} \, \dfrac{\Lambda_\Omega}{1+\dfrac{\Lambda_\Omega}{q_\Omega} \dfrac{1}{\xi}} < \mu_\Omega(a) < \underset{\xi \nearrow +\infty}{\lim} \, \dfrac{\Lambda_\Omega}{1+\dfrac{4\pi}{|\partial\Omega|} \dfrac{1}{\xi}} = \Lambda_\Omega \quad \text{for all } a>0.
\end{equation}
Similarly, the bounds in  \Cref{thm:geometric_bounds}~$(ii)$ are in agreement with the fact that $\lambda_\Omega(\theta,m) \in (m, \sqrt{\Lambda_\Omega+m^2})$ for all $\theta \in (-\frac{\pi}{2},\frac{\pi}{2})$. Indeed, by monotonicity we have
\begin{equation}
    m = \underset{\xi \nearrow +\infty}{\lim} \, \mathcal B^{\theta,m}_{\Omega}(\xi) < \lambda_\Omega(\theta,m) < \underset{\xi \searrow0}{\lim} \, \mathcal B^{\theta,m}_{\Omega}(\xi) = \sqrt{\Lambda_\Omega+m^2} \quad \text{for all } \theta \in (-\frac{\pi}{2},\frac{\pi}{2}).
\end{equation}

\begin{remark} \label{rmk:Comparison_Benguria}
     We compare our lower bound for $\lambda_\Omega(\theta,m)$ with the one previously obtained in~\cite[Theorem~1]{Benguria2017Spectral} for simply connected domains with $C^2$ boundary when~$m=0$, namely,
     \begin{equation} \label{eq:LB_Benguria}
         \lambda_\Omega(\theta,0) \geq \mathcal C^{\theta,0}_\Omega  \, \text{ for all } \theta\in \textstyle{ \left( -\frac \pi 2, \frac \pi 2 \right) }, \quad \text{where } \mathcal C^{\theta,0}_\Omega := \sqrt{\dfrac{2\pi}{|\Omega|}} \min \big\{ \vartheta(\theta), \vartheta(\theta)^{-1} \big\}.
     \end{equation}
     This bound is, in some cases, better than the one in~\Cref{thm:geometric_bounds}. Indeed, when $\theta=0$ (thus~$\vartheta(0)=1$) and $\Omega$ is the unit disk $\disk$ (hence $q_\disk=2$ and $\Lambda_\disk \approx 5.78$, see for example~\cite[Table~2]{Kuttler1968}), we have
     \begin{equation}
         1.43 \approx \lambda_\disk(0,0) > \mathcal C^{0,0}_\disk = \sqrt{2} \approx 1.41 > 1.36 \approx \mathcal B^{0,0}_\disk \left( \frac{\Lambda_\disk}{2q_\disk} \right).
     \end{equation}
     However, \Cref{fig:bounds} shows that the lower bound in~\Cref{thm:geometric_bounds} can be much better for other values of~$\theta\in(-\frac{\pi}{2},\frac{\pi}{2})$. Moreover, the geometric dependence of~$\mathcal C^{\theta,0}_\Omega$ is only on the area of~$\Omega$. This means that the lower bound in~\eqref{eq:LB_Benguria} is the same for all domains with the same area. Instead, even if we consider domains with the same area, the constant~$\mathcal B^{\theta,0}_\Omega \left( \Lambda_\Omega/2q_\Omega \right)$ can be larger (better) than~$\mathcal C^{\theta,0}_\Omega$ if the geometry of~$\Omega$ is such that the quotient~$\Lambda_\Omega/q_\Omega$ is small enough. The results in~\Cref{sec:FK} are evidence of this.
\end{remark}

\begin{figure}[h]
    \centering
    \includegraphics[width=0.75\linewidth]{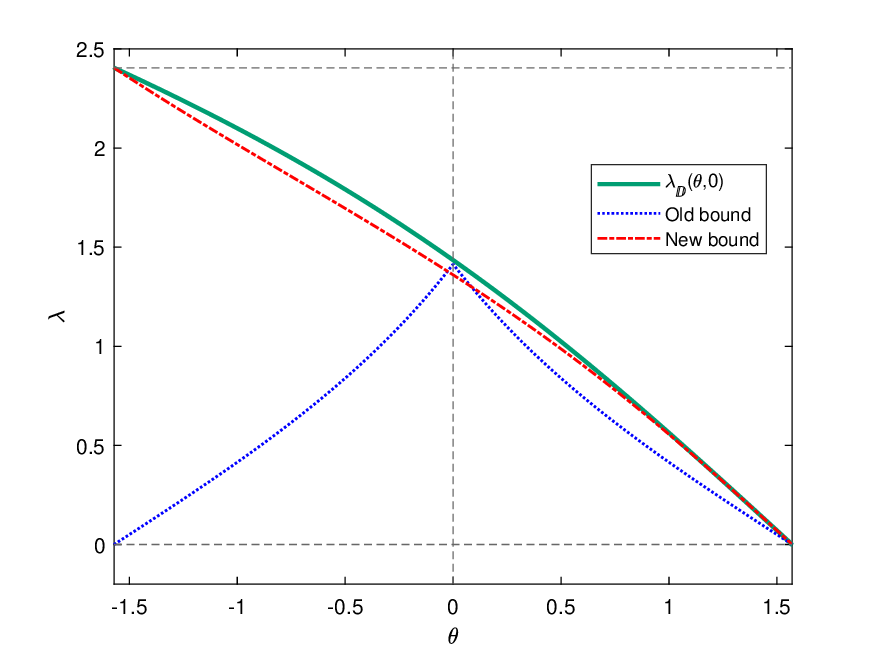}
    \caption{Comparison between the old (dotted line,~$\mathcal C^{\theta,0}_\disk$) and new (dash-dotted line,~$\mathcal B^{\theta,0}_\disk(\Lambda_\disk/2q_\disk)$) lower bounds for~$\lambda_\disk(\theta,0)$ (continuous line). The horizontal dashed lines indicate the values~$m=0$ and~$\sqrt{\Lambda_\disk}$, and the vertical dashed line indicates the infinite mass boundary condition~$\theta=0$.}
    \label{fig:bounds}
\end{figure}

\subsection{Some Faber-Krahn type inequalities} \label{sec:FK}

As an application of \Cref{thm:geometric_bounds}, in this section we obtain some Faber-Krahn type inequalities both for~$\mu_\Omega(a)$ and~$\lambda_\Omega(\theta,m)$. These are independent of the boundary parameters~$a$ and $\theta$~when the geometric quantities appearing in the lower bounds of~\Cref{thm:geometric_bounds} can be conveniently controlled.

\begin{corollary} \label{cor:FK_all}
    Let $\Omega\subset \R^2$ be a bounded domain with $C^2$ boundary, and let~$q_\Omega$ be as in~\eqref{def:qOmega}.~If
    \begin{equation} \label{eq:all_qOmega}
        \sqrt{\dfrac{|\Omega|}{\pi}} q_\Omega \geq \dfrac{\Lambda_\disk}{2}
    \end{equation}
    then the following statements hold, where $D\subset \R^2$ is a disk with the same area as $\Omega$:
    \begin{enumerate}[label=$(\roman*)$]
        \item $\mu_\Omega(a) > \mu_D(a)$ for all $a>0$.
        \item $\lambda_\Omega(\theta,m)>\lambda_D(\theta,m)$ for all $\theta\in (-\frac \pi 2, \frac \pi 2)$ and all $m\geq 0$. 
    \end{enumerate}
\end{corollary}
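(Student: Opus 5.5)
We compare the lower bound of \Cref{thm:geometric_bounds}~$(i)$ for $\Omega$ with the upper bound of \Cref{thm:geometric_bounds}~$(i)$ for the disk $D$. The hypothesis \eqref{eq:all_qOmega} is precisely what makes the first dominate the second.

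First we record the relevant data of $D$. Let $R:=\sqrt{|\Omega|/\pi}$ be its radius. By scaling, $\Lambda_D = \Lambda_\disk/R^2$ and $|\partial D| = 2\pi R$. Faber--Krahn for the Dirichlet Laplacian gives $\Lambda_\Omega \geq \Lambda_D$.

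Statement~$(i)$ follows from a chain of inequalities. Fix $a>0$. By \Cref{thm:geometric_bounds}~$(i)$ applied to $\Omega$, and since $\Lambda/(1+\Lambda/(qa)) = (1/\Lambda + 1/(qa))^{-1}$ is increasing in both $\Lambda$ and $q$,
\begin{equation}
    \mu_\Omega(a) > \Big(\frac{1}{\Lambda_\Omega}+\frac{1}{q_\Omega a}\Big)^{-1} \geq \Big(\frac{1}{\Lambda_D}+\frac{1}{q_\Omega a}\Big)^{-1}.
\end{equation}
By \Cref{thm:geometric_bounds}~$(i)$ applied to $D$, and since $4\pi/|\partial D| = 2/R$,
\begin{equation}
    \mu_D(a) < \Big(\frac{1}{\Lambda_D}+\frac{2}{R\Lambda_D a}\Big)^{-1} = \Big(\frac{1}{\Lambda_D}+\frac{2R}{\Lambda_\disk a}\Big)^{-1}.
\end{equation}
It therefore suffices to have $1/q_\Omega \le 2R/\Lambda_\disk$, that is, $R\,q_\Omega \geq \Lambda_\disk/2$, which is exactly \eqref{eq:all_qOmega}. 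This gives $\mu_\Omega(a)>\mu_D(a)$ for every $a>0$. Note that $D$ is $C^2$, so \Cref{thm:geometric_bounds} applies to it.

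Statement~$(ii)$ follows from \Cref{cor:translate_bounds}. Fix $\theta$ and $m$, and set $\mathcal B:=\lambda_D(\theta,m)$. This is positive, since $\lambda_D(\theta,m)>m\ge0$, and for $m=0$ it is positive by the range $(m,\sqrt{\Lambda_D+m^2})$. Let $a=\vartheta(\theta)(\mathcal B+m)$. Applying \Cref{cor:translate_bounds}~$(i)$ to $D$ gives $\mu_D(a)=\mathcal B^2-m^2$. By part~$(i)$, $\mu_\Omega(a)>\mu_D(a)=\mathcal B^2-m^2$. Then \Cref{cor:translate_bounds}~$(ii)$ applied to $\Omega$ gives $\lambda_\Omega(\theta,m)>\mathcal B=\lambda_D(\theta,m)$.

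There is no serious obstacle. The only delicate points are checking that the bound expressions are monotone in $\Lambda$ and $q$, so that $\Lambda_\Omega$ can be replaced by the smaller $\Lambda_D$ in the lower bound, and computing the scaling of $\Lambda_D$ and $|\partial D|$ correctly. The strict inequalities in \Cref{thm:geometric_bounds}~$(i)$ make the conclusion strict even when \eqref{eq:all_qOmega} holds with equality.
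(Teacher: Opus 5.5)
Your proof is correct and is essentially the paper's argument. It uses the chain $\mu_D(a) < \Lambda_D/(1+\frac{4\pi}{|\partial D|}\frac1a) \le \Lambda_\Omega/(1+\frac{\Lambda_\Omega}{q_\Omega}\frac1a) < \mu_\Omega(a)$ from \Cref{thm:geometric_bounds}~$(i)$, with the middle step coming from the hypothesis and Faber--Krahn; your monotonicity rewriting shows that the non-strict $\Lambda_\Omega\ge\Lambda_D$ suffices, whereas the paper uses the strict version. For $(ii)$ the paper cites \Cref{cor:Equivalence_Conjectures}, and the relevant implication there is exactly your direct application of \Cref{cor:translate_bounds} with $\mathcal B=\lambda_D(\theta,m)$.
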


This result says that if $\Omega$ is a convex thin domain, then its first eigenvalue is larger than the one in a disk with the same area. See~\Cref{rmk:examples}, where we discuss some additional hypotheses on~$\Omega$ that guarantee condition~\eqref{eq:all_qOmega}; notice that disks do not satisfy~\eqref{eq:all_qOmega}.

However, since the scale invariant quantity~$\sqrt{|\Omega|}q_\Omega$ can be arbitrarily small ---see~\cite[Section~2.2]{Bucur2009}---, \Cref{cor:FK_all} does not ensure a Faber-Krahn inequality for all domains. Even in this case, we are able to obtain Faber-Krahn inequalities for some regimes of boundary parameters. This was already shown in~\cite[Theorems~2.4 and~2.5]{DuranMasSanzPerela2026} using different arguments but, as a novelty of the present paper, we can now quantify such regimes.

\begin{corollary} \label{cor:FK_some}
    Let $\Omega\subset \R^2$ be a bounded domain with $C^2$ boundary different from a disk, and let~$q_\Omega$ be as in~\eqref{def:qOmega}.~If
    \begin{equation}
        \sqrt{\dfrac{|\Omega|}{\pi}} q_\Omega < \dfrac{\Lambda_\disk}{2}
    \end{equation}
    then
    {\small \begin{equation}
        A_\Omega := \dfrac{\Lambda_\Omega}{\dfrac{|\Omega|}{\pi}\dfrac{\Lambda_\Omega}{\Lambda_\disk}-1} \left( \dfrac{1}{q_\Omega} - \dfrac{2}{\Lambda_\disk} \sqrt{\dfrac{|\Omega|}{\pi}} \right) >0, \, \, \, 
        \Theta_{\Omega,m} := \vartheta^{-1} \left( \dfrac{A_\Omega}{\sqrt{\frac{\pi}{|\Omega|} \mu_\disk \Big( \sqrt{\frac{|\Omega|}{\pi}} A_\Omega \Big) +m^2} +m} \right) \in{\textstyle(-\frac \pi 2, \frac \pi 2)},
    \end{equation} }
    and the following statements hold, where $D\subset \R^2$ is a disk with the same area as $\Omega$:
    \begin{enumerate}[label=$(\roman*)$]
        \item $\mu_\Omega(a) > \mu_D(a)$ for all $a \in [A_\Omega,+\infty)$.
        \item $\lambda_\Omega(\theta,m)>\lambda_D(\theta,m)$ for all $m\geq 0$ and all $\theta\in (-\frac \pi 2, \Theta_{\Omega,m}]$.
    \end{enumerate}
\end{corollary}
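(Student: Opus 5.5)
The plan is to compare the lower bound of \Cref{thm:geometric_bounds}~$(i)$ for~$\Omega$ with the upper bound of \Cref{thm:geometric_bounds}~$(i)$ for the disk~$D$. This gives $(i)$ for all $a\geq A_\Omega$. Then I transfer $(i)$ to $(ii)$ through \Cref{cor:translate_bounds}, identifying the threshold $\Theta_{\Omega,m}$ as the angle at which the intersection point $a_\star$ of \Cref{fig:interpretation} for the disk equals~$A_\Omega$.

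\emph{Step 1 (scaling and positivity).} Let $R:=\sqrt{|\Omega|/\pi}$, so that $D=R\disk$ up to translation. Rescaling $u(x)\mapsto u(x/R)$ in \eqref{eq:RQ_Rodzin_mu} gives
\begin{equation}
\mu_D(a)=R^{-2}\mu_\disk(Ra),\qquad \Lambda_D=R^{-2}\Lambda_\disk,\qquad |\partial D|=2\pi R.
\end{equation}
The Faber--Krahn inequality for the Dirichlet Laplacian is strict because $\Omega$ is not a disk, so $\Lambda_\Omega>\Lambda_D$, that is, $\frac{|\Omega|}{\pi}\frac{\Lambda_\Omega}{\Lambda_\disk}>1$. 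The hypothesis gives $\frac1{q_\Omega}-\frac{2R}{\Lambda_\disk}>0$. Hence $A_\Omega>0$.

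\emph{Step 2 (proof of $(i)$).} By \Cref{thm:geometric_bounds}~$(i)$ applied to $\Omega$ (lower bound) and to $D$ (upper bound, where $4\pi/|\partial D|=2/R$), it suffices to check
\begin{equation}
\frac{\Lambda_\Omega}{1+\frac{\Lambda_\Omega}{q_\Omega a}}\;\geq\;\frac{\Lambda_D}{1+\frac{2}{Ra}}.
\end{equation}
Cross-multiplying, this is equivalent to
\begin{equation}
a(\Lambda_\Omega-\Lambda_D)\geq \Lambda_\Omega\Lambda_D\Big(\frac1{q_\Omega}-\frac{2R}{\Lambda_\disk}\Big),
\end{equation}
where I used $2/(R\Lambda_D)=2R/\Lambda_\disk$. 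Dividing by $\Lambda_D(\Lambda_\Omega-\Lambda_D)>0$ and using $\Lambda_\Omega/\Lambda_D=\frac{|\Omega|}{\pi}\frac{\Lambda_\Omega}{\Lambda_\disk}$, this becomes exactly $a\geq A_\Omega$. Both inequalities in \Cref{thm:geometric_bounds}~$(i)$ are strict, so $\mu_\Omega(a)>\mu_D(a)$ for every $a\geq A_\Omega$, including the endpoint $a=A_\Omega$.

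\emph{Step 3 (proof of $(ii)$).} Fix $m\geq0$ and define
\begin{equation}
a_D(\theta):=\vartheta(\theta)\big(\lambda_D(\theta,m)+m\big).
\end{equation}
By \Cref{thm:recipe_bounds}, $a_D(\theta)$ is the unique solution of $\mu_D(a)=(a/\vartheta(\theta)-m)^2-m^2$, and $\mu_D(a_D(\theta))=\lambda_D(\theta,m)^2-m^2$.

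Now take $\theta\leq\Theta_{\Omega,m}$ and suppose $a_D(\theta)\geq A_\Omega$ (justified in Step 4). Then Step 2 gives $\mu_\Omega(a_D(\theta))>\lambda_D(\theta,m)^2-m^2$. Applying \Cref{cor:translate_bounds}~$(ii)$ with $\mathcal B=\lambda_D(\theta,m)$ and $a=a_D(\theta)$ then yields $\lambda_\Omega(\theta,m)>\lambda_D(\theta,m)$.

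\emph{Step 4 (main point: $a_D(\theta)\geq A_\Omega$ exactly when $\theta\leq\Theta_{\Omega,m}$).} First, $\Theta_{\Omega,m}$ is well defined because its argument is positive and $\vartheta$ is a bijection onto $(0,+\infty)$. Set $B_0:=\sqrt{\mu_D(A_\Omega)+m^2}$; by Step 1 this equals $\sqrt{\frac{\pi}{|\Omega|}\mu_\disk(RA_\Omega)+m^2}$. The definition of $\Theta_{\Omega,m}$ reads $A_\Omega=\vartheta(\Theta_{\Omega,m})(B_0+m)$, and $\mu_D(A_\Omega)=B_0^2-m^2$ by construction. So \Cref{cor:translate_bounds}~$(i)$ applied to $D$ gives $\lambda_D(\Theta_{\Omega,m},m)=B_0$, hence $a_D(\Theta_{\Omega,m})=A_\Omega$.

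It remains to show that $a_D$ is nonincreasing in $\theta$. As $\theta$ decreases, $\vartheta(\theta)$ increases because $\vartheta$ is strictly decreasing. Also $\lambda_D(\theta,m)$ increases, since $\theta\mapsto\lambda_D(\theta,m)$ is strictly decreasing (\cite[Proposition~3.8]{DuranMasSanzPerela2026}). Both factors are positive, so their product $a_D(\theta)$ increases. Therefore $\theta\leq\Theta_{\Omega,m}$ implies $a_D(\theta)\geq A_\Omega$, which closes Step 3.

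The argument is mostly bookkeeping. The point needing the most care is this identification of the threshold angle with the disk's intersection point $a_D$, which relies on using \Cref{cor:translate_bounds} on $D$ itself.
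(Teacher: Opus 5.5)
Your proposal is correct and follows the paper's proof. Part $(i)$ is the same comparison of the lower bound of \Cref{thm:geometric_bounds}~$(i)$ for $\Omega$ with the upper bound for $D$. One harmless slip in Step~2: you should divide by $\Lambda_\Omega-\Lambda_D$, not by $\Lambda_D(\Lambda_\Omega-\Lambda_D)$. Part $(ii)$ is the same transfer through the disk's eigenvalue correspondence, with one small variation in how the threshold angle is reached. You parametrize by $\theta$ via $a_D(\theta)=\vartheta(\theta)\big(\lambda_D(\theta,m)+m\big)$ and get its monotonicity directly from that of $\vartheta$ and $\theta\mapsto\lambda_D(\theta,m)$. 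The paper instead parametrizes by $a$ through \Cref{cor:Pointwise_Equivalence}~$(ii)$ and invokes the continuity, monotonicity and bijectivity of $a\mapsto\vartheta^{-1}\big(a/(\sqrt{\mu_D(a)+m^2}+m)\big)$ from \cite[Lemma~3.7]{DuranMasSanzPerela2026}. Your version is slightly more self-contained, since it needs no surjectivity.
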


We conclude with a brief discussion on domains that satisfy condition~\eqref{eq:all_qOmega}.

\begin{remark} \label{rmk:examples}
    Condition~\eqref{eq:all_qOmega} is not void. In this remark we use the lower bounds of~$q_\Omega$ available in the literature ---see~\cite{Kuttler1968,Payne1970,Raulot2015}--- to show that convex thin enough domains satisfy~\eqref{eq:all_qOmega}. Such lower bounds of $q_\Omega$ are written in terms of the simpler geometric quantities
    \begin{equation}
        \kappa_\mathrm{min} := \min_{x\in\partial\Omega} \kappa(x), \quad \text{where } \, \kappa(x) := \text{the curvature of } x\in\partial\Omega,
    \end{equation}
    and
    \begin{equation}
        \rho := \text{the inradius of } \Omega \text{ (namely, the radius of the largest disk inscribed in } \Omega \text{)}.
    \end{equation}
    More specifically, by~\cite[Theorem~12]{Raulot2015} there holds
    \begin{equation}
        q_\Omega \geq \dfrac{1}{\rho} \left( 1-\dfrac{\rho \kappa_\mathrm{min}}{2} \right)^{-1}.
    \end{equation}
    As a consequence, if~$\Omega$ is such that
    \begin{equation}
        \sqrt{\dfrac{\pi}{|\Omega|}} \rho \left (1- \dfrac{\rho \kappa_\mathrm{min}}{2} \right) \leq \dfrac{2}{\Lambda_\disk},
    \end{equation}
    then condition~\eqref{eq:all_qOmega} is satisfied. Observe that this happens, for example, when~$\Omega$ is convex (then~$\kappa_\mathrm{min} \geq 0$) and thin enough (then~$\rho$ is small enough).
\end{remark}

\section{Translation of bounds} \label{sec:proof_translation}

This section is devoted to the proofs of \Cref{thm:recipe_bounds} and \Cref{cor:translate_bounds}. Due to the crucial role that will play in the proofs, we briefly recall the connection between quantum dot Dirac operators and~$\overline\partial$-Robin Laplacians shown in~\cite[Section~3.1]{DuranMasSanzPerela2026}. To this end, let $m\geq 0$ be given. 

On the one hand, if the vector function $\varphi=(u,v)^\intercal:\Omega\to \C^2$ solves~\eqref{eq:QD_eigen}, namely
\begin{equation}
    \begin{cases}
        -2i \partial_z v = (\lambda-m) u & \text{in } \Omega, \\
        -2i \partial_{\bar z} u = (\lambda+m)v & \text{in } \Omega, \\
        \overline \nu v = i \vartheta(\theta) u & \text{on } \partial \Omega,
    \end{cases} \quad \text{ for some } \, \lambda>m, \, \theta\in \textstyle{(-\frac{\pi}{2},\frac{\pi}{2})},
\end{equation}
then by combining the equations we see that its first component $u:\Omega\to\C$ solves
\begin{equation}
    \begin{cases}
        -\Delta u = \mu u & \text{in } \Omega, \\
        2 \overline \nu \partial_{\bar z} u + a u = 0 & \text{on } \partial \Omega,
    \end{cases} \text{ with } \mu:= \lambda^2-m^2>0, \, a := \vartheta(\theta)(\lambda+m)>0.
\end{equation}
On the other hand, if the scalar function $u:\Omega\to\C$ solves~\eqref{eq:Rodzin_eigen}, namely
\begin{equation}
    \begin{cases}
        -\Delta u = \mu u & \text{in } \Omega, \\
        2 \overline \nu \partial_{\bar z} u + a u = 0 & \text{on } \partial \Omega.
    \end{cases} \quad \text{for some } \, \mu>0, \, a>0,
\end{equation}
then by construction the vector function $\varphi=(u,v)^\intercal:= \Big( u, \frac{-2i}{\sqrt{\mu+m^2}+m} \partial_{\bar z} u \Big)^\intercal:\Omega\to \C^2$ solves
\begin{equation}
    \begin{cases}
        -2i \partial_z v = (\lambda-m) u & \text{in } \Omega, \\
        -2i \partial_{\bar z} u = (\lambda+m)v & \text{in } \Omega, \\
        \overline \nu v = i \vartheta(\theta) u & \text{on } \partial \Omega,
    \end{cases} \text{ with } \lambda:= \sqrt{\mu+m^2}>m, \, \theta:= \textstyle{ \vartheta^{-1} \Big(\frac{a}{\sqrt{\mu+m^2}+m} \Big) } \in \textstyle{(-\frac{\pi}{2},\frac{\pi}{2})}.
\end{equation}
This motivates considering the (bijective) function
\begin{equation} \label{def:T_function}
    \begin{split}
        T_m:{\textstyle(-\frac \pi 2, \frac \pi 2)}\times(m,+\infty) & \to(0,+\infty) \times (0,+\infty) \\
        (\theta,\lambda) & \mapsto \Big( \vartheta(\theta) (\lambda+m),\lambda^2-m^2 \Big),
    \end{split}
\end{equation}
whose inverse is
    \begin{equation} \label{def:T-1_function}
    \begin{split}
        T_m^{-1}: (0,+\infty)\times(0,+\infty)&\to{\textstyle(-\frac \pi 2, \frac \pi 2)}\times(m,+\infty)\\
        (a,\mu)&\mapsto  \Bigg( \vartheta^{-1}\Big({\textstyle\frac{a}{\sqrt{\mu+m^2}+m}}\Big),\sqrt{\mu + m^2} \Bigg).
    \end{split}
\end{equation}
In view of the previous argument, one might be tempted to assert that the first eigenvalues~$\lambda_\Omega(\theta,m)$ and~$\mu_\Omega(a)$ are mapped to each other through the transformation $T_m$. This is not an obvious assertion, because the images by $T_m$ of two different eigenvalues $\lambda_1(\theta,m)<\lambda_2(\theta,m)$ at the same boundary parameter~$\theta$ are two different eigenvalues $\mu_1(a_1)<\mu_2(a_2)$ at two different boundary parameters $a_1<a_2$. The mapping of the first eigenvalues is actually one of the key results in \cite{DuranMasSanzPerela2026}. More specifically, the following is proven in~\cite[Proposition~3.2]{DuranMasSanzPerela2026}; see~\Cref{fig:transformation} for a graphical representation of this result.

\begin{proposition}[\cite{DuranMasSanzPerela2026}] \label{prop:firstTOfirst}
	Let $(\theta,\lambda)\in{\textstyle(-\frac \pi 2, \frac \pi 2)}\times(m,+\infty)$ and $(a,\mu)\in(0,+\infty)\times(0,+\infty)$ be such that $T_m(\theta,\lambda) = (a,\mu)$. Then, 
	$\lambda=\lambda_\Omega(\theta,m)$ if and only if 
	$\mu=\mu_\Omega(a)$.
\end{proposition}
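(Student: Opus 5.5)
The plan is to reduce the comparison of first eigenvalues to a purely one-dimensional statement about where the curve $a\mapsto\mu_\Omega(a)$ crosses a convex parabola. Fix $m\geq 0$ and $\theta$, and set
\begin{equation}
    p(a):=\Big(\frac{a}{\vartheta(\theta)}-m\Big)^2-m^2 = \frac{a}{\vartheta(\theta)}\Big(\frac{a}{\vartheta(\theta)}-2m\Big).
\end{equation}
Note that $T_m(\theta,\lambda)=(a,\mu)$ means exactly that $a=\vartheta(\theta)(\lambda+m)$ and $\mu=p(a)$. Moreover, $\lambda=a/\vartheta(\theta)-m$ is strictly increasing in $a$. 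The proof will show that the eigenvalues $\lambda>m$ of \eqref{eq:QD_eigen} at the fixed $\theta$ correspond to the values of $a$ at which some eigenvalue of \eqref{eq:Rodzin_eigen} equals $p(a)$. The smallest such $a$ will then be identified with the unique crossing of $\mu_\Omega$ and $p$.

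\emph{Step 1: no eigenvalues in $[0,m]$.} Let $(u,v)\neq0$ solve \eqref{eq:QD_eigen} with $0\leq\lambda\leq m$.
\begin{itemize}
    \item If $\lambda+m>0$, substitute $v=\frac{-2i}{\lambda+m}\partial_{\bar z}u$. Integrating $-\Delta u\,\overline u$ by parts and using the boundary condition $2\overline\nu\partial_{\bar z}u+a u=0$ with $a=\vartheta(\theta)(\lambda+m)>0$ gives
    \begin{equation}
        4\int_\Omega|\partial_{\bar z}u|^2+a\int_{\partial\Omega}|u|^2=(\lambda^2-m^2)\int_\Omega|u|^2\leq 0 .
    \end{equation}
    Hence $u\equiv0$, and then $v\equiv0$, a contradiction.
    \item If $\lambda=m=0$, then $\partial_{\bar z}u=0$, $\partial_z v=0$ and $\overline\nu v=i\vartheta u$. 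Integrating $\partial_{\bar z}(u\overline v)$ and similar identities over $\Omega$ should give $\int_{\partial\Omega}|u|^2=0$, hence $u=v=0$.
\end{itemize}
Consequently every nonnegative eigenvalue is $>m$. By the correspondence recalled above, $\lambda>m$ is an eigenvalue at $\theta$ if and only if $\mu=p(a)$ is an eigenvalue of $\RR_a$ with $a=\vartheta(\theta)(\lambda+m)$. In that case the variational characterization \eqref{eq:RQ_Rodzin_mu} gives $\mu_\Omega(a)\leq p(a)$.

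\emph{Step 2: a unique crossing.} Consider $f(a):=\mu_\Omega(a)-p(a)$. By \cite[Theorem~1.3]{Duran2026}, $\mu_\Omega$ is continuous and strictly concave, and $p$ is convex, so $f$ is strictly concave. As $a\to+\infty$, $f\to-\infty$ because $\mu_\Omega<\Lambda_\Omega$. Near $a=0$, $f>0$: testing \eqref{eq:RQ_Rodzin_mu} with $u\equiv1$ and using the lower bound from the strict concavity, $\mu_\Omega(a)$ behaves linearly in $a$ with positive slope, while $p(a)\leq a^2/\vartheta^2$. I expect making this small-$a$ comparison rigorous to be the main obstacle. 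Concretely, one needs $\liminf_{a\to0}\mu_\Omega(a)/a>0$ when $m=0$; when $m>0$ it is immediate since $p<0$ near $0$. For $m=0$ I would try concavity: $\mu_\Omega(a)\geq \frac{a}{a_0}\mu_\Omega(a_0)$ for $a\leq a_0$, because $\mu_\Omega(0^+)=0$. Granting this, $f$ has exactly one zero $a_\star>0$, with $f>0$ on $(0,a_\star)$ and $f<0$ on $(a_\star,+\infty)$.

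\emph{Step 3: conclusion.} At $a_\star$, $\mu_\Omega(a_\star)=p(a_\star)$ is an eigenvalue of $\RR_{a_\star}$, so $\lambda_\star:=a_\star/\vartheta(\theta)-m$ is an eigenvalue at $\theta$. Conversely, any eigenvalue $\lambda'>m$ at $\theta$ gives, by Step 1, a parameter $a'$ with $f(a')\leq0$, so $a'\geq a_\star$ and $\lambda'\geq\lambda_\star$. Together with Step 1 this yields $\lambda_\Omega(\theta,m)=\lambda_\star$.

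Now take $T_m(\theta,\lambda)=(a,\mu)$, so $\mu=p(a)$. Then
\begin{equation}
    \lambda=\lambda_\Omega(\theta,m)\iff a=a_\star\iff f(a)=0\iff \mu_\Omega(a)=p(a)=\mu,
\end{equation}
which is the claimed equivalence.
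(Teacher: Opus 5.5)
Your argument is correct, and it takes a genuinely different route from the paper. The paper does not prove this proposition. It imports it from~\cite{DuranMasSanzPerela2026} and then uses it, together with the imported fact $S_\Omega>0$, to derive the unique-crossing characterization of Lemma~\ref{thm:recipe_bounds}.

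You reverse that order. You use only two inputs:
\begin{itemize}
\item the elementary eigenfunction correspondence, which shows that every eigenvalue $\lambda'>m$ at $\theta$ lands at $a'=\vartheta(\theta)(\lambda'+m)$ with $\mu_\Omega(a')\le p(a')$;
\item the concavity and range of $\mu_\Omega$.
\end{itemize}
From these you show that $\{f\le 0\}=[a_\star,+\infty)$. Since $\lambda=a/\vartheta(\theta)-m$ increases with $a$, the first Dirac eigenvalue must sit at $a_\star$, and the proposition follows.

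This resolves, by a one-dimensional sign argument, exactly the difficulty the paper flags before the proposition: distinct eigenvalues at a fixed $\theta$ are sent to distinct parameters $a$. There is no circularity. You never use Lemma~\ref{thm:recipe_bounds}, nor the monotonicity or range of $\theta\mapsto\lambda_\Omega(\theta,m)$, which the paper takes from the companion paper as consequences of this very proposition. Your Step~1 also excludes eigenvalues in $[0,m]$ explicitly, whereas the paper relies on the imported range $(m,\sqrt{\Lambda_\Omega+m^2})$.

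So your route proves the proposition and Lemma~\ref{thm:recipe_bounds} at once from more basic inputs. It needs only plain concavity of $\mu_\Omega$, which is immediate because $\mu_\Omega$ is an infimum of affine functions of $a$, and it makes the input $S_\Omega>0$ unnecessary. The paper's presentation is shorter only because it outsources the hard step.

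The small-$a$ comparison you flag as the main obstacle is already settled by your own argument. Set $\mu_\Omega(0):=0$ and use concavity on $[0,+\infty)$. Then $a\mapsto\mu_\Omega(a)/a$ is nonincreasing, so $f(a)\ge a\big(\mu_\Omega(a_0)/a_0-a/\vartheta(\theta)^2\big)>0$ for small $a$. This also shows $S_\Omega>0$ directly. The test function $u\equiv 1$ gives an upper bound, so it only serves to see that $\mu_\Omega(0^+)=0$.

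Your sketch for $\lambda=m=0$ works: $\int_{\partial\Omega}|u|^2=\frac{i}{\vartheta(\theta)}\int_{\partial\Omega}\nu\, u\overline v=\frac{2i}{\vartheta(\theta)}\int_\Omega\partial_{\bar z}(u\overline v)=0$.

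Two details should be written out:
\begin{itemize}
\item In Step~1 with $\lambda=m>0$, the identity gives only $\partial_{\bar z}u=0$ and $u=0$ on $\partial\Omega$. You then need that a holomorphic function with zero trace vanishes.
\item In Step~3 you should record that $\lambda_\star>m$. This holds because $p(a_\star)=\mu_\Omega(a_\star)>0$ and $a_\star>0$ force $a_\star/\vartheta(\theta)>2m$.
\end{itemize}
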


\begin{figure}[h]
	\begin{tikzpicture}[scale=2]
		
		\draw[->, thick] (-1.571,0) -- (1.85,0) node[anchor=north] {$\theta$};
		\draw[->, thick] (-1.57,0) -- (-1.57,1.8);
        \node[below, gray] at (-1.571,0) {$-\pi/2$};
        \draw[dashed, gray] (1.5,{sqrt(2+0.2*0.2+0.8}) -- (1.5,0) node[below] {$\pi/2$};
		
        \draw[thick, domain=-1.571:1.5, smooth, variable=\x, CBgreen]
		plot ({\x}, {-cos(\x r)/(1-sin(\x r))/2+ sqrt((cos(\x r)/(1-sin(\x r))/2+0.2)*(cos(\x r)/(1-sin(\x r))/2+0.2)+2)});
        \node[below, CBgreen] at (0.8, 0.7) {$\lambda_\Omega$};

        \draw[dashed, gray] (1.6,{sqrt(2+0.2*0.2}) -- (-1.571,{sqrt(2+0.2*0.2}) node[left] {$\sqrt{\Lambda_\Omega+m^2}$};
        \draw[dashed, gray] (1.6,0.27) -- (-1.571,0.27) node[left] {$m$};

        \node (A) at ({0.3}, {-cos(0.3 r)/(1-sin(0.3 r))/2+ sqrt((cos(0.3 r)/(1-sin(0.3 r))/2+0.2)*(cos(0.3 r)/(1-sin(0.3 r))/2+0.2)+2)+0.02}) {\phantom{$\Box$}};
        \fill ({0.3}, {-cos(0.3 r)/(1-sin(0.3 r))/2+ sqrt((cos(0.3 r)/(1-sin(0.3 r))/2+0.2)*(cos(0.3 r)/(1-sin(0.3 r))/2+0.2)+2)}) circle (0.03) node[below left] {$\big( \theta,\lambda_\Omega(\theta,m) \big)$};

		\begin{scope}[xshift=2.3cm]
			\draw[->, thick] (0,0) -- (3.4,0) node[anchor=north] {$a$};
			\draw[->, thick] (0,0) -- (0,1.8);
            \node[below, gray] at (0,0) {$0$};
            \node[left, gray] at (0,0) {$0$};

			\draw[thick, domain=0:3.3, smooth, variable=\x, CBorange] plot ({\x}, {2*\x/(1+\x)});
            \node[below, CBorange] at (0.7, 0.7) {$\mu_\Omega$};

            \draw[dashed, gray] (3.3,1.6) -- (0,1.6) node[left] {$\Lambda_\Omega$};

            \node (B) at ({1.5}, {2*1.5/(1+1.5)+0.03}) {\phantom{$\Box$}};
		    \fill ({1.5}, {2*1.5/(1+1.5)}) circle (0.03) node[below right] {$\big( a,\mu_\Omega(a) \big)$};

		\end{scope}

        \draw[->, thick, blue] (A) edge[bend left=10] node[left, sloped, fill=white] {$T_m$} (B);
        \draw[<-, thick, blue] (A) edge[bend left=-10] node[left, sloped, fill=white] {$T_m^{-1}$} (B);
        
	\end{tikzpicture}
	\caption{Graphical representation of the connection between the eigenvalues~$\lambda_\Omega(\theta,m)$ and $\mu_\Omega(a)$ given by the transformation~$T_m$.}
	\label{fig:transformation}
\end{figure}

We are now ready to prove our main theorem.

\begin{proof}[Proof of \Cref{thm:recipe_bounds}]
    Let $m\geq 0$ and $\theta\in (-\frac{\pi}{2},\frac{\pi}{2})$ be given. We first show the existence of a unique~$a_\star>0$ solving
    \begin{equation} \label{eq:unique_a}
        \mu_\Omega(a_\star) = \left(\dfrac{a_\star}{\vartheta(\theta)}-m\right)^2-m^2.
    \end{equation}
    On the one hand, by \cite[Theorem 1.3]{Duran2026}, the function $a\mapsto \mu_\Omega(a)$ is a continuous and strictly concave bijection from $(0,+\infty)$ to $(0,\Lambda_\Omega)$. Moreover, as a consequence of~\cite[Proposition~2.6~$(i)$ and~$(ii)$]{DuranMasSanzPerela2026} its slope at the origin, which we denote as
    \begin{equation}
        S_\Omega := \lim_{a\searrow0} \dfrac{\mu_\Omega(a)}{a},
    \end{equation}
    is strictly positive. On the other hand, the function
    \begin{equation}
        0 < a \mapsto p(a):= \left(\dfrac{a}{\vartheta(\theta)}-m\right)^2-m^2
    \end{equation}
    ---which was denoted $p_{\theta,m}$ in~\Cref{fig:interpretation}--- is continuous, strictly convex, unbounded from above, and its slope at the origin is 
    \begin{equation}
        p'(0) = \dfrac{-2m}{\vartheta(\theta)} \leq 0 < S_\Omega.
    \end{equation}
    As a consequence, the function $0<a \mapsto p(a) - \mu_\Omega(a)$ is continuous, strictly convex, negative for all small enough $a>0$, and positive for all large enough $a>0$. Hence, it has a unique zero in~$(0,+\infty)$. This shows the existence of a unique $a_\star>0$ solving~\eqref{eq:unique_a}. 
    
    We now prove that this unique $a_\star>0$ satisfies
    \begin{equation} \label{eq:unique_a_QD}
        \lambda_\Omega(\theta,m) = \dfrac{a_\star}{\vartheta(\theta)}-m.
    \end{equation}
    To this end, we use the connection recalled in \Cref{prop:firstTOfirst}, namely,
    \begin{equation}
        \text{if } \, \tilde\lambda := \sqrt{\mu_\Omega(a_\star)+m^2} \, \text{ and } \, \tilde\theta:= \vartheta^{-1}\left(\dfrac{a_\star}{\sqrt{\mu_\Omega(a_\star)+m^2}+m} \right), \, \text{ then } \, \tilde\lambda = \lambda_\Omega(\tilde\theta,m).
    \end{equation}
    Since $a_\star>0$ solves \eqref{eq:unique_a}, on the one hand we have
    \begin{equation}
        \tilde\lambda = \sqrt{\mu_\Omega(a_\star)+m^2} = \dfrac{a_\star}{\vartheta(\theta)}-m,
    \end{equation}
    and on the other hand we have
    \begin{equation}
        \tilde\theta = \vartheta^{-1}\left(\dfrac{a_\star}{\sqrt{\mu_\Omega(a_\star)+m^2}+m} \right) = \vartheta^{-1}\left(\dfrac{a_\star}{a_\star/\vartheta(\theta)} \right) = \theta.
    \end{equation}
    Combining both equations we get that
    \begin{equation}
        \lambda_\Omega(\theta,m) = \lambda_\Omega(\tilde \theta,m) = \tilde\lambda = \dfrac{a_\star}{\vartheta(\theta)}-m.
    \end{equation}
    Finally, since $\frac{a}{\vartheta(\theta)}-m$ is linear in $a$, the solution to this last equation is unique.
\end{proof}

As an immediate consequence of \Cref{thm:recipe_bounds}, we can translate bounds for $\mu_\Omega(a)$ to bounds for $\lambda_\Omega(\theta,m)$ ---and vice versa--- as established in \Cref{cor:translate_bounds}. 

\begin{proof}[Proof of \Cref{cor:translate_bounds}]
    Let $m\geq 0$ be given. We first show that, as an immediate consequence of~\Cref{thm:recipe_bounds}, the following statements hold whenever $a>0$ and~$\theta\in(-\frac \pi 2, \frac \pi 2)$:
    \begin{enumerate}
        \item[$(A)$] $\mu_\Omega(a) > p(a) = \left(\dfrac{a}{\vartheta(\theta)}-m\right)^2-m^2$ if and only if $\lambda_\Omega(\theta,m) > \dfrac{a}{\vartheta(\theta)}-m$.
        \item[$(B)$] $\mu_\Omega(a) < p(a) = \left(\dfrac{a}{\vartheta(\theta)}-m\right)^2-m^2$ if and only if $\lambda_\Omega(\theta,m) <\dfrac{a}{\vartheta(\theta)}-m$.
    \end{enumerate}
    In the proof of~\Cref{thm:recipe_bounds} we have seen that, given~$\theta\in(-\frac \pi 2, \frac \pi 2)$, the continuous function $a \mapsto p(a) - \mu_\Omega(a)$ is negative for small enough $a>0$ and positive for large enough $a>0$. Hence, if $a_\star$ is the unique solution in $(0,+\infty)$ to $\mu_\Omega(a_\star) = p(a_\star)$, we have
    \begin{equation} \label{eq:aux_ineq}
        p(a) - \mu_\Omega(a) >0 \, \text{ if and only if } a>a_\star, \quad \text{and} \quad p(a) - \mu_\Omega(a) <0 \, \text{ if and only if } a<a_\star.
    \end{equation}
    Since, by~\eqref{eq:unique_a_QD}, there holds
    \begin{equation}
        a_\star = \vartheta(\theta)\big( \lambda_\Omega(\theta,m) + m \big),
    \end{equation}
    the conditions~\eqref{eq:aux_ineq} rewrite as
    \begin{equation}
        \begin{split}
            p(a) - \mu_\Omega(a) >0 & \, \text{ if and only if } \dfrac{a}{\vartheta(\theta)}-m > \lambda_\Omega(\theta,m), \quad \text{and} \\ 
        p(a) - \mu_\Omega(a) <0 & \, \text{ if and only if } \dfrac{a}{\vartheta(\theta)}-m < \lambda_\Omega(\theta,m).
        \end{split}
    \end{equation}
    This proves the statements $(A)$ and $(B)$. Using them, we now address the proof of~\Cref{cor:translate_bounds}. Recall that given $m\geq 0$ and~$\mathcal B>0$, we take~$a>0$ and~$\theta\in(-\frac \pi 2, \frac \pi 2)$ related by
    \begin{equation}
        a = \vartheta(\theta) \big( \mathcal B + m \big) \quad \text{or, equivalently,} \quad \theta = \vartheta^{-1} \left( \dfrac{a}{\mathcal B +m} \right).
    \end{equation}
    For such a relation of parameters, we clearly have
    \begin{equation}
        \mathcal B = \dfrac{a}{\vartheta(\theta)} - m \quad \text{and} \quad \mathcal B^2 -m^2 = \left( \dfrac{a}{\vartheta(\theta)} - m \right)^2 - m^2.
    \end{equation}
    Then item~$(i)$ readily follows from \Cref{thm:recipe_bounds}, and items~$(ii)$ and~$(iii)$ readily follow from statements~$(A)$ and~$(B)$.
\end{proof}

As we shall see in the proof of \Cref{cor:FK_all,cor:FK_some}, with the aim of proving Faber-Krahn type inequalities between two bounded~$C^2$ domains $\Omega\subset \R^2$ and $\Omega_0 \subset \R^2$ it might be useful to consider \Cref{cor:translate_bounds} with the choices~$\mathcal B = \lambda_{\Omega_0}(\theta,m)$ or $\mathcal B = \sqrt{\mu_{\Omega_0}(a)+m^2}$. This gives rise to the following result.

\begin{corollary} \label{cor:Pointwise_Equivalence}
    Let $\Omega, \Omega_0 \subset \R^2$ be two bounded domains with $C^2$ boundary. Given $m\geq 0$, consider $\lambda_\Omega(\theta):= \lambda_\Omega(\theta,m)$ as a function only of $\theta$ (and similarly for $\Omega_0$). The following hold:
    \begin{enumerate}[label=$(\roman*)$]
        \item Given $\theta\in(-\frac \pi 2, \frac \pi 2)$, then 
        \begin{equation} \label{def:a_as_theta_conj}
            a:= \vartheta(\theta) \big( \lambda_{\Omega_0}(\theta) + m \big) = \mu_{\Omega_0}^{-1} \big( \lambda_{\Omega_0}(\theta)^2-m^2 \big)>0
        \end{equation}
        and
        \begin{enumerate}
            \item[$(a)$] $\lambda_\Omega(\theta) = \lambda_{\Omega_0}(\theta)$ if and only if $\mu_\Omega(a) = \mu_{\Omega_0}(a)$,
            \item[$(b)$] $\lambda_\Omega(\theta) > \lambda_{\Omega_0}(\theta)$ if and only if $\mu_\Omega(a) > \mu_{\Omega_0}(a)$,
            \item[$(c)$] $\lambda_\Omega(\theta) < \lambda_{\Omega_0}(\theta)$ if and only if $\mu_\Omega(a) < \mu_{\Omega_0}(a)$.
        \end{enumerate} \vspace{0.25cm}
        \item Given $a>0$, then 
        \begin{equation} \label{def:theta_as_a_conj}
            \theta:= \vartheta^{-1} \left( \dfrac{a}{\sqrt{\mu_{\Omega_0}(a) + m^2}+m} \right) = \lambda_{\Omega_0}^{-1} \big({\textstyle\sqrt{\mu_{\Omega_0}(a)+m^2}}\big) \in{\textstyle(-\frac \pi 2, \frac \pi 2)}
        \end{equation}
        and
        \begin{enumerate}
            \item[$(a')$] $\mu_\Omega(a) = \mu_{\Omega_0}(a)$ if and only if $\lambda_\Omega(\theta) = \lambda_{\Omega_0}(\theta)$,
            \item[$(b')$] $\mu_\Omega(a)> \mu_{\Omega_0}(a)$ if and only if $\lambda_\Omega(\theta)> \lambda_{\Omega_0}(\theta)$,
            \item[$(c')$] $\mu_\Omega(a) < \mu_{\Omega_0}(a)$ if and only if $\lambda_\Omega(\theta) < \lambda_{\Omega_0}(\theta)$.
        \end{enumerate}
    \end{enumerate}
\end{corollary}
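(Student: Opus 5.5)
The plan is to apply \Cref{cor:translate_bounds} to the domain $\Omega$, with the quantity $\mathcal B$ chosen from the reference domain $\Omega_0$. The identities in \eqref{def:a_as_theta_conj} and \eqref{def:theta_as_a_conj} will come from \Cref{thm:recipe_bounds}, or equivalently \Cref{prop:firstTOfirst}, applied to $\Omega_0$. In both parts the key point is that, with these choices, the parameters $a$ and $\theta$ are related exactly as \Cref{cor:translate_bounds} requires.

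\textbf{Part $(i)$.} Fix $\theta$ and set $\mathcal B:=\lambda_{\Omega_0}(\theta)$. This is positive, since $\lambda_{\Omega_0}(\theta)\in(m,\sqrt{\Lambda_{\Omega_0}+m^2})$, and we have $\mathcal B>0$ even when $m=0$.

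Define $a:=\vartheta(\theta)(\mathcal B+m)>0$. By \Cref{thm:recipe_bounds} applied to $\Omega_0$, this $a$ is exactly the unique solution of $\mu_{\Omega_0}(a)=(a/\vartheta(\theta)-m)^2-m^2=\mathcal B^2-m^2$. Since $\mu_{\Omega_0}$ is a bijection from $(0,+\infty)$ onto $(0,\Lambda_{\Omega_0})$, and $\mathcal B^2-m^2$ lies in that interval, this gives $a=\mu_{\Omega_0}^{-1}(\lambda_{\Omega_0}(\theta)^2-m^2)$, which proves \eqref{def:a_as_theta_conj}.

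Now apply \Cref{cor:translate_bounds} to $\Omega$ with this $\mathcal B$, $a$ and $\theta$. Its items $(i)$--$(iii)$ state that $\lambda_\Omega(\theta)$ is $=$, $>$ or $<$ than $\mathcal B$ if and only if $\mu_\Omega(a)$ is $=$, $>$ or $<$ than $\mathcal B^2-m^2=\mu_{\Omega_0}(a)$. This is precisely $(a)$--$(c)$.

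\textbf{Part $(ii)$.} Fix $a>0$ and set $\mathcal B:=\sqrt{\mu_{\Omega_0}(a)+m^2}>m\ge 0$, which is admissible. Define $\theta:=\vartheta^{-1}\bigl(a/(\mathcal B+m)\bigr)$. This lies in $(-\frac\pi2,\frac\pi2)$ because $\vartheta$ is a bijection onto $(0,+\infty)$.

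We have $T_m(\theta,\mathcal B)=(a,\mu_{\Omega_0}(a))$, so \Cref{prop:firstTOfirst} for $\Omega_0$ gives $\lambda_{\Omega_0}(\theta)=\mathcal B$. Since $\theta\mapsto\lambda_{\Omega_0}(\theta,m)$ is strictly decreasing and bijective onto $(m,\sqrt{\Lambda_{\Omega_0}+m^2})$, by \cite[Proposition~3.8]{DuranMasSanzPerela2026}, this yields $\theta=\lambda_{\Omega_0}^{-1}(\mathcal B)$, which proves \eqref{def:theta_as_a_conj}.

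Then \Cref{cor:translate_bounds} for $\Omega$ with this $\mathcal B$ and the pair $(a,\theta)$, which satisfies $a=\vartheta(\theta)(\mathcal B+m)$, gives the following: $\mu_\Omega(a)$ compared with $\mathcal B^2-m^2=\mu_{\Omega_0}(a)$ has the same sign as $\lambda_\Omega(\theta)$ compared with $\mathcal B=\lambda_{\Omega_0}(\theta)$. This is exactly $(a')$--$(c')$.

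No step is a genuine obstacle. The only care needed is to check that $\mathcal B>0$ and that the parameter relation in \Cref{cor:translate_bounds} holds exactly for the constructed pair $(a,\theta)$. The inverse-function identities follow from the bijectivity of $\mu_{\Omega_0}$, of $\lambda_{\Omega_0}(\cdot,m)$ and of $\vartheta$, all recalled in \Cref{sec:main_results}.
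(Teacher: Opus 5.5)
Your proposal is correct and takes essentially the same approach as the paper: you choose $\mathcal B=\lambda_{\Omega_0}(\theta)$, respectively $\mathcal B=\sqrt{\mu_{\Omega_0}(a)+m^2}$, apply \Cref{cor:translate_bounds} to $\Omega$, and get the inverse-function identities from the $\Omega_0$ case together with bijectivity of $\mu_{\Omega_0}$ and of $\lambda_{\Omega_0}(\cdot,m)$. The only difference is cosmetic: you invoke \Cref{thm:recipe_bounds} and \Cref{prop:firstTOfirst} for $\Omega_0$ directly, whereas the paper uses \Cref{cor:translate_bounds}~$(i)$ for $\Omega_0$ (which is derived from them) and an auxiliary $a_\star$ or $\theta_\star$.
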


Before proving \Cref{cor:Pointwise_Equivalence}, let us compare it with \cite[Theorem~2.1]{DuranMasSanzPerela2026}.

\begin{remark}
    We note that \Cref{cor:Pointwise_Equivalence}~$(i)$~$(b)$ and~$(ii)$~$(b')$ is an improvement of~\cite[Theorem~2.1]{DuranMasSanzPerela2026}. First, while here~$\Omega_0$ can be any domain, in~\cite[Theorem~2.1]{DuranMasSanzPerela2026}~$\Omega_0\equiv D$ is taken to be a disk with the same~area as~$\Omega$. Furthermore, in~\cite[Theorem~2.1]{DuranMasSanzPerela2026} the boundary parameter~$a$ of item~$(i)$ is taken to~be
    \begin{equation} \label{aJFA}
        \tilde a:=\mu_\Omega^{-1}(\lambda_D(\theta)^2-m^2)>0,
    \end{equation}
    and the boundary parameter $\theta$ of item~$(ii)$ is taken to be 
    \begin{equation} \label{thetaJFA}
        \tilde \theta:=\lambda_\Omega^{-1} \big({\textstyle\sqrt{\mu_D(a)+m^2}}\big) \in{\textstyle(-\frac \pi 2, \frac \pi 2)}.
    \end{equation}
    Notice that these parameters depend on \emph{both} domains~$\Omega$ and~$D\equiv \Omega_0$, unlike~\eqref{def:a_as_theta_conj} and~\eqref{def:theta_as_a_conj}. With~\Cref{cor:Pointwise_Equivalence} we can take parameters uniformly in $\Omega$ in order to test inequalities for a fixed $\Omega_0$. Nevertheless, although the election of parameters~\eqref{def:a_as_theta_conj} versus~\eqref{aJFA} is different, the corresponding results are equivalent. Indeed, let us show that
    \begin{equation}
        \mu_\Omega(\tilde a) > \mu_D(\tilde a) \text{ with } \tilde a \text{ as in \eqref{aJFA}} \quad \text{if and only if} \quad \mu_\Omega(a) > \mu_D(a) \text{ with } a \text{ as in \eqref{def:a_as_theta_conj}}.
    \end{equation}
    This follows from the fact that the function~$a\mapsto \mu_\Omega(a)$ is strictly increasing for every domain~$\Omega$; recall~\cite[Theorem~1.3]{Duran2026}. Using this, for the ``only if" implication we observe that
    \begin{equation}
        a = \mu_D^{-1}(\lambda_D(\theta)^2-m^2) = \mu_D^{-1}(\mu_\Omega(\tilde a)) > \mu_D^{-1}(\mu_D(\tilde a)) = \tilde a,
    \end{equation}
    and then we conclude that
    \begin{equation}
        \mu_\Omega(a) > \mu_\Omega(\tilde a) = \lambda_D^2(\theta)-m^2 = \mu_D(a).
    \end{equation}
    For the ``if" implication we observe that
    \begin{equation}
        \tilde a = \mu_\Omega^{-1} (\lambda_D(\theta)^2-m^2) = \mu_\Omega^{-1} (\mu_D(a)) < \mu_\Omega^{-1} (\mu_\Omega(a)) = a,
    \end{equation}
    and then we conclude that
    \begin{equation}
        \mu_D(\tilde a) < \mu_D(a) = \lambda_D(\theta)^2-m^2 = \mu_\Omega(\tilde a).
    \end{equation}
    An analogous argument shows that the election of parameters~\eqref{def:theta_as_a_conj} versus~\eqref{thetaJFA} is equivalent, namely, that
    \begin{equation}
        \lambda_\Omega(\tilde \theta) > \lambda_D(\tilde \theta) \text{ with } \tilde \theta \text{ as in \eqref{thetaJFA}} \quad \text{if and only if} \quad \lambda_\Omega(\theta) > \mu_D(\theta) \text{ with } \theta \text{ as in \eqref{def:theta_as_a_conj}},
    \end{equation}
    In conclusion,~\cite[Theorem~2.1]{DuranMasSanzPerela2026} and~\Cref{cor:Pointwise_Equivalence}~$(b)$ and~$(b')$ are equivalent when $\Omega_0$ is a disk with the same area as~$\Omega$.
\end{remark}

We address now the proof of \Cref{cor:Pointwise_Equivalence}. Recall that, given $m\geq 0$, we consider~$\theta\mapsto \lambda_\Omega(\theta):= \lambda_\Omega(\theta,m)$ as a function only of $\theta\in(-\frac \pi 2, \frac \pi 2)$.

\begin{proof}[Proof of \Cref{cor:Pointwise_Equivalence}]
    Given $\theta\in(-\frac \pi 2, \frac \pi 2)$, once~\eqref{def:a_as_theta_conj} is shown, item~$(i)$ readily follows choosing $\mathcal B = \lambda_{\Omega_0}(\theta)>0$ in~\Cref{cor:translate_bounds} with
    \begin{equation}
        a := \vartheta(\theta) \big( \lambda_{\Omega_0}(\theta) + m \big).
    \end{equation}
    We henceforth only have to prove the second equality in~\eqref{def:a_as_theta_conj}, namely, that
    \begin{equation}
        a = \mu_{\Omega_0}^{-1} \big( \lambda_{\Omega_0}(\theta)^2-m^2 \big);
    \end{equation}
    notice that the right-hand side is a well-defined positive number, because $0 < \lambda_{\Omega_0}(\theta)^2-m^2 < \Lambda_{\Omega_0}$ and~$a\mapsto \mu_{\Omega_0}(a)$ is a bijection from $(0,+\infty)$ to $(0,\Lambda_{\Omega_0})$; recall the paragraph below~\eqref{eq:RQ_Rodzin_mu}. Setting $a_\star := \mu_{\Omega_0}^{-1} \big( \lambda_{\Omega_0}(\theta)^2-m^2 \big)$, we observe that
    \begin{equation}
        \mu_{\Omega_0}(a_\star) = \lambda_{\Omega_0}(\theta)^2-m^2 = \mu_{\Omega_0}(a),
    \end{equation}
    the last equality holding by~\Cref{cor:translate_bounds}~$(i)$. Since~$a\mapsto \mu_{\Omega_0}(a)$ is a bijection, we conclude that~$a_\star=a$, as desired.

    In a similar way, given $a>0$, once~\eqref{def:theta_as_a_conj} is shown item~$(ii)$ readily follows choosing $\mathcal B = \sqrt{\mu_{\Omega_0}(a)+m^2}>0$ in~\Cref{cor:translate_bounds} with
    \begin{equation}
        \theta:= \vartheta^{-1} \left( \dfrac{a}{\sqrt{\mu_{\Omega_0}(a) + m^2}+m} \right).
    \end{equation}
    We henceforth only have to prove the second equality in~\eqref{def:theta_as_a_conj}, namely, that
    \begin{equation}
        \theta = \lambda_{\Omega_0}^{-1} \big({\textstyle\sqrt{\mu_{\Omega_0}(a)+m^2}}\big);
    \end{equation}
    notice that the right-hand side is a well-defined number in $(-\frac{\pi}{2}, \frac{\pi}{2})$, since $m< \sqrt{\mu_{\Omega_0}(a)+m^2} < \sqrt{\Lambda_{\Omega_0}+m^2}$ and~$\theta\mapsto\lambda_{\Omega_0}(\theta)$ is a bijection from $(-\frac{\pi}{2}, \frac{\pi}{2})$ to $(m,\sqrt{\Lambda_{\Omega_0}+m^2})$. Setting $\theta_\star := \lambda_{\Omega_0}^{-1} \big({\textstyle\sqrt{\mu_{\Omega_0}(a)+m^2}}\big)$, we observe that 
    \begin{equation}
        \lambda_{\Omega_0}(\theta_\star) = \sqrt{\mu_{\Omega_0}(a)+m^2} = \lambda_{\Omega_0}(\theta),
    \end{equation}
    the last equality holding by~\Cref{cor:translate_bounds}~$(i)$. Since $\theta\mapsto\lambda_{\Omega_0}(\theta)$ is a bijection, we conclude that~$\theta_\star = \theta$, as desired.
\end{proof}

As an immediate consequence of~\Cref{cor:Pointwise_Equivalence}, we obtain the following equivalences of Faber-Krahn inequalities. This result was already shown in~\cite[Corollary~2.2]{DuranMasSanzPerela2026} in the case when~$\Omega$ is not a disk and~$\Omega_0$ is a disk with the same area as $\Omega$; see also~\cite[Remark~2.3]{DuranMasSanzPerela2026}.

\begin{corollary} \label{cor:Equivalence_Conjectures}
    Let $\Omega, \Omega_0 \subset \R^2$ be two bounded domains with $C^2$ boundary. The following statements are equivalent:
    \begin{enumerate}[label=$(\roman*)$]
        \item $\mu_\Omega(a) > \mu_{\Omega_0}(a)$ for all $a>0$.
        \item $\lambda_\Omega(\theta,m)>\lambda_{\Omega_0}(\theta,m)$ for all $\theta\in (-\frac \pi 2, \frac \pi 2)$, for some $m\geq 0$.
        \item $\lambda_\Omega(\theta,m)>\lambda_{\Omega_0}(\theta,m)$ for all $\theta\in (-\frac \pi 2, \frac \pi 2)$ and all $m\geq 0$.
    \end{enumerate}
\end{corollary}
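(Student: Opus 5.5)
The plan is to prove the cycle of implications $(i)\Rightarrow(iii)\Rightarrow(ii)\Rightarrow(i)$, using \Cref{cor:Pointwise_Equivalence} as the only tool. Since $(iii)\Rightarrow(ii)$ is trivial, only the other two implications need an argument.

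For $(i)\Rightarrow(iii)$, fix an arbitrary $m\geq 0$ and $\theta\in(-\frac\pi2,\frac\pi2)$. Define $a:=\vartheta(\theta)\big(\lambda_{\Omega_0}(\theta,m)+m\big)$ as in \eqref{def:a_as_theta_conj}. By \Cref{cor:Pointwise_Equivalence}~$(i)$, this $a$ is a well-defined positive number. By hypothesis $(i)$, we then have $\mu_\Omega(a)>\mu_{\Omega_0}(a)$. Item~$(i)(b)$ of \Cref{cor:Pointwise_Equivalence} now gives $\lambda_\Omega(\theta,m)>\lambda_{\Omega_0}(\theta,m)$. Since $\theta$ and $m$ were arbitrary, $(iii)$ follows.

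For $(ii)\Rightarrow(i)$, let $m\geq0$ be the mass for which $(ii)$ holds, and fix an arbitrary $a>0$. Define $\theta:=\vartheta^{-1}\big(a/(\sqrt{\mu_{\Omega_0}(a)+m^2}+m)\big)$ as in \eqref{def:theta_as_a_conj}. By \Cref{cor:Pointwise_Equivalence}~$(ii)$, this $\theta$ lies in $(-\frac\pi2,\frac\pi2)$. Hypothesis $(ii)$ therefore applies at this $\theta$ and gives $\lambda_\Omega(\theta,m)>\lambda_{\Omega_0}(\theta,m)$. Item~$(b')$ of \Cref{cor:Pointwise_Equivalence} then yields $\mu_\Omega(a)>\mu_{\Omega_0}(a)$, and since $a$ was arbitrary, $(i)$ follows.

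There is no real obstacle. The one point to check is that the parameter maps in \Cref{cor:Pointwise_Equivalence} land in the correct ranges: $(0,+\infty)$ for $a$ and $(-\frac\pi2,\frac\pi2)$ for $\theta$. This is what lets the universally quantified hypothesis be applied at the specific parameter, and it is already guaranteed by that corollary.
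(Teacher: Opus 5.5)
Your proof is correct and is essentially the paper's argument. Both proofs obtain $(i)$ from $(ii)$ by applying \Cref{cor:Pointwise_Equivalence}~$(ii)$ at the fixed mass, obtain $(iii)$ from $(i)$ by applying \Cref{cor:Pointwise_Equivalence}~$(i)$ at an arbitrary mass, and note that $(iii)\Rightarrow(ii)$ is trivial; you only present the cycle in a different order.
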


\begin{proof}
    Given $m_0\geq 0$, assume that $\lambda_\Omega(\theta,m_0)>\lambda_{\Omega_0}(\theta,m_0)$ for all $\theta\in (-\frac \pi 2, \frac \pi 2)$. Applying~\Cref{cor:Pointwise_Equivalence}~$(ii)$ with $m=m_0$, we deduce that $\mu_\Omega(a) > \mu_{\Omega_0}(a)$ for all $a>0$, thus $(ii)$ implies $(i)$. Applying now~\Cref{cor:Pointwise_Equivalence}~$(i)$ with an arbitrary $m\geq0$, this in turn leads to $\lambda_\Omega(\theta,m)>\lambda_{\Omega_0}(\theta,m)$ for all $\theta\in (-\frac \pi 2, \frac \pi 2)$ and all $m\geq 0$, which gives~$(iii)$. Obviously, $(iii)$ implies $(ii)$.
\end{proof}

\section{Geometric upper and lower bounds} \label{sec:proof_bounds}

This section is devoted to the proof of~\Cref{thm:geometric_bounds}. The bounds for $\lambda_\Omega(\theta,m)$ in~\Cref{thm:geometric_bounds}~$(ii)$ will follow from the bounds for $\mu_\Omega(a)$ in~\Cref{thm:geometric_bounds}~$(i)$ together with the translation given in~\Cref{cor:translate_bounds}. The bounds for $\mu_\Omega(a)$ in~\Cref{thm:geometric_bounds}~$(i)$ will be proven following the ideas in \cite{Sperb1972}, which lead to geometric upper and lower bounds for the first eigenvalue~$\mu_\Omega^\mathrm{Rob}(a)$ of the Robin Laplacian with boundary parameter $a>0$. We recall that the variational characterizations of $\mu_\Omega^\mathrm{Rob}(a)$ and $\mu_\Omega(a)$ are, respectively,
\begin{equation}
    \mu_\Omega^\mathrm{Rob}(a) = \inf_{u\in H^1(\Omega,\R)\setminus\{0\}} \dfrac{\int_\Omega |\nabla u|^2 + a\int_{\partial\Omega} |u|^2}{\int_\Omega |u|^2},
\end{equation}
and
\begin{equation} \label{proof:mu}
    \mu_\Omega(a) = \inf_{u\in E(\Omega)\setminus\{0\}} \dfrac{4\int_\Omega |\partial_{\bar z} u|^2 + a\int_{\partial\Omega} |u|^2}{\int_\Omega |u|^2},
\end{equation}
where $E(\Omega) := \{u\in L^2(\Omega,\C):\, \partial_{\bar z} u\in L^2(\Omega,\C)\text{ and }u\in L^2(\partial\Omega,\C)\}.$

\begin{proof}[Proof of \Cref{thm:geometric_bounds}]
    We first address the proof of $(i)$, following the ideas in \cite{Sperb1972}. 
    
    \underline{Upper bound in $(i)$:} We use that $\mu_\Omega(a) \leq \mu_\Omega^\mathrm{Rob}(a)$. This inequality immediately follows, on the one hand, from the inclusion $H^1(\Omega,\R) \subsetneq E(\Omega)$ and, on the other hand, from the fact that~$4|\partial_{\bar z} u|^2 = |\nabla u|^2$ almost everywhere, for all~$u\in H^1(\Omega,\R)$. Then, the upper bound for~$\mu_\Omega(a)$ trivially follows from the upper bound for~$\mu_\Omega^\mathrm{Rob}(a)$ proven in~\cite[equation~(11') and~Bemerkung below]{Sperb1972}, which is precisely
    \begin{equation} \label{eq:boundRobin}
        \mu_\Omega^\mathrm{Rob}(a) < \dfrac{\Lambda_\Omega}{1+\dfrac{4\pi}{|\partial\Omega|} \dfrac{1}{a}}.
    \end{equation}

    \underline{Lower bound in $(i)$:} Let $u\in E(\Omega)\setminus\{0\}$ be a minimizer of the variational characterization~\eqref{proof:mu} of $\mu_\Omega(a)$, which exists by~\cite[Theorem 1.2]{Duran2026}. We decompose $u$ as a sum of a harmonic function and a zero trace function, namely
    \begin{equation} \label{eq:decomposition}
        u = h+w, \quad \text{where} \quad 
        \begin{cases}
            -\Delta h = 0 & \text{in } \Omega, \\
            h = u & \text{on } \partial\Omega,
        \end{cases} \quad \text{and} \quad 
        \begin{cases}
            -\Delta w = -\Delta u & \text{in } \Omega, \\
            w = 0 & \text{on } \partial\Omega.
        \end{cases}
    \end{equation}
    By existence and uniqueness of the solutions of the Dirichlet and Poisson problems on Lipschitz domains ---in particular, when $\Omega$ is $C^2$---, such a decomposition exists and is unique. Furthermore, it is not trivial. Indeed, by~\cite[Theorem~1.2]{Duran2026} the minimizer $u$ solves
    \begin{equation}
        \begin{cases}
            -\Delta u = \mu_\Omega(a) u & \text{in } \Omega, \\
            2\overline \nu \partial_{\bar z} u + au = 0 & \text{on } \partial\Omega,
        \end{cases} \quad \text{with} \quad 0<\mu_\Omega(a) < \Lambda_\Omega,
    \end{equation}
    hence $\Delta u \not\equiv 0$ in $\Omega$ and $u\not\equiv 0$ on $\partial\Omega$; indeed, if the minimizer vanishes at the boundary then it is an eigenfunction of the Dirichlet Laplacian, contradicting the fact that $\mu_\Omega(a) < \Lambda_\Omega$. Therefore, $h\not\equiv 0$ and $w\not\equiv 0$. Notice that the functions~$h$ and~$w$ are not necessarily orthogonal in $L^2(\Omega,\C)$, but they satisfy
    \begin{equation} \label{eq:orthogonality}
        4 \int_{\Omega} \partial_{\bar z} w \, \overline{\partial_{\bar z} h} = - \int_\Omega w \, \overline{\Delta h} + 2 \int_{\partial\Omega} \nu w \, \overline{ \partial_{\bar z} h} = 0 \quad \text{and} \quad \int_{\partial\Omega} w\, \overline h = 0;
    \end{equation}
    in the first equality we have used the divergence theorem, writing $\Delta = 4\partial_{\bar z}\partial_z$. Using that~$u$ is a minimizer of the variational characterization of~$\mu_\Omega(a)$, the decomposition~\eqref{eq:decomposition}, the orthogonality conditions in~\eqref{eq:orthogonality}, and the triangle inequality, we can bound the reciprocal of the Rayleigh quotient of the variational formulation for~$\mu_\Omega(a)$ evaluated at~$u$ as follows:
    \begin{equation}
        \dfrac{1}{\mu_\Omega(a)} = \dfrac{\int_\Omega |u|^2}{4\int_\Omega |\partial_{\bar z} u|^2 + a \int_{\partial\Omega} |u|^2} \leq \dfrac{\left( \left(\int_\Omega |h|^2\right)^{1/2} + \left(\int_\Omega |w|^2\right)^{1/2} \right)^2}{4\int_\Omega |\partial_{\bar z} h|^2 + a \int_{\partial\Omega} |h|^2 + 4\int_\Omega |\partial_{\bar z} w|^2}.
    \end{equation}
    We now split the quotient in the right-hand side as a sum of quotients of $h$ and $w$ solely, in virtue of the elementary inequality
    \begin{equation}
        \dfrac{(\alpha+\beta)^2}{\gamma^2+\delta^2} \leq \dfrac{\alpha^2}{\gamma^2} + \dfrac{\beta^2}{\delta^2} \quad \text{for all } \alpha,\beta \in \R, \text{ and } \gamma,\delta\in \R\setminus\{0\},
    \end{equation}
    which follows from applying the Cauchy-Schwarz inequality to the scalar product~$(\frac{\alpha}{\gamma},\frac{\beta}{\delta})\cdot(\gamma,\delta)$. Thus,
    \begin{equation} \label{proof:aux_estimate}
        \dfrac{1}{\mu_\Omega(a)} \leq \dfrac{\int_\Omega |h|^2}{4\int_\Omega |\partial_{\bar z} h|^2 + a \int_{\partial\Omega} |h|^2} + \dfrac{\int_\Omega |w|^2}{4\int_\Omega |\partial_{\bar z} w|^2} \leq \dfrac{\int_\Omega |h|^2}{a \int_{\partial\Omega} |h|^2} + \dfrac{\int_\Omega |w|^2}{\int_\Omega |\nabla w|^2},
    \end{equation}
    where in the last inequality we have used, on the one hand, that $4\int_\Omega |\partial_{\bar z} h|^2 \geq 0$ and, on the other hand, that 
    \begin{equation}
        4\int_\Omega |\partial_{\bar z} w|^2 = \int_\Omega |\nabla w|^2,
    \end{equation}
    which holds true by the divergence theorem because $w$ has zero trace; see~\cite[Lemma~3.5]{Duran2026}. Using the variational characterizations 
    \begin{equation}
        q_\Omega = \inf_{\Delta h = 0 \text{ in } \Omega} \dfrac{\int_{\partial\Omega} |h|^2}{\int_\Omega |h|^2} \quad \text{and} \quad \Lambda_\Omega = \inf_{w = 0 \text{ on } \partial\Omega} \dfrac{\int_\Omega |\nabla w|^2}{\int_\Omega |w|^2},
    \end{equation}
    from \eqref{proof:aux_estimate} we conclude that
    \begin{equation} \label{aux:conclusion}
        \dfrac{1}{\mu_\Omega(a)} \leq \dfrac{1}{aq_\Omega} + \dfrac{1}{\Lambda_\Omega}.
    \end{equation}
    Note that equality in~\eqref{aux:conclusion} never holds. Indeed, if there was equality, then the minimizer~$u$ of the variational characterization of~$\mu_\Omega(a)$ would be, on the one hand, an eigenfunction of the~$\overline\partial$-Robin Laplacian with eigenvalue~$\mu_\Omega(a)$ and, on the other hand, the sum of a minimizer~$h$ of the Rayleigh quotient defining~$q_\Omega$ and a minimizer~$w$ of the Rayleigh quotient defining~$\Lambda_\Omega$. Since the former is a harmonic function, and the latter is an eigenfunction of the Dirichlet Laplacian in~$\Omega$ with eigenvalue~$\Lambda_\Omega$, we would conclude that
    \begin{equation}
        \mu_\Omega(a) (h+w) = \mu_\Omega(a) u = -\Delta u = -\Delta(h+w) = -\Delta w = \Lambda_\Omega w \quad \text{in } \Omega,
    \end{equation}
    hence $\mu_\Omega(a) h= (\Lambda_\Omega-\mu_\Omega(a))w$ in $\Omega$. Since $\Lambda_\Omega-\mu_\Omega(a) > 0$, this shows that~$w$ is harmonic and contradicts the fact that~$\Delta w = \Delta u \not\equiv 0$. This shows that the inequality in~\eqref{aux:conclusion} is strict. Rearranging terms, we obtain the lower bound in~$(i)$. We now address the proof of~$(ii)$.

    \underline{Bounds in $(ii)$:} The idea is to use \Cref{cor:translate_bounds} to translate the previously obtained bounds of $\mu_\Omega(a)$ to bounds of $\lambda_\Omega(\theta,m)$. To this end, we want to rewrite the bounds of $\mu_\Omega(a)$, which are written in the form
    \begin{equation} \label{aux:mathcalA}
        \mathcal A_\Omega \left(\frac{\Lambda_\Omega}{2q_\Omega}, a \right) < \mu_\Omega(a) < \mathcal A_\Omega \left( \frac{2\pi}{|\partial\Omega|}, a \right) \quad \text{with} \quad \mathcal A_\Omega (\xi,a) := \dfrac{\Lambda_\Omega}{1+\dfrac{2\xi}{a}},
    \end{equation}
    in such a way that, for a given $\theta \in (-\frac{\pi}{2}, \frac{\pi}{2})$,
    \begin{equation}
        \mathcal A_\Omega \big( \xi,a \big) = \mathcal B^2-m^2 \quad \text{with} \quad a = \vartheta(\theta) \big( \mathcal B + m \big).
    \end{equation}
    We can achieve this by taking $a=a(\xi)>0$ such that
    \begin{equation} \label{proof:unique_axi}
        \mathcal A_\Omega \big( \xi,a(\xi) \big) = \left( \dfrac{a(\xi)}{\vartheta(\theta)} -m \right)^2 -m^2,
    \end{equation}
    and in this way, by \Cref{cor:translate_bounds} we have that
    \begin{equation} \label{proof:bounds_axi}
        \dfrac{a\left(\frac{\Lambda_\Omega}{2q_\Omega}\right)}{\vartheta(\theta)} - m < \lambda_\Omega(\theta,m) < \dfrac{a\left(\frac{2\pi}{|\partial\Omega|}\right)}{\vartheta(\theta)} - m.
    \end{equation}
    Note that $a(\xi)>0$ solves \eqref{proof:unique_axi} if and only if
    \begin{equation}
        \dfrac{a(\xi) \Lambda_\Omega}{a(\xi)+ 2\xi} = \left( \dfrac{a(\xi)}{\vartheta(\theta)} - 2m \right)\dfrac{a(\xi)}{\vartheta(\theta)},
    \end{equation}
    and a straightforward computation shows that the unique positive solution to this equation is
    \begin{equation}
        a(\xi) = m\vartheta(\theta) -\xi + \sqrt{\big( m\vartheta(\theta) + \xi \big)^2+\Lambda_\Omega \vartheta(\theta)^2}.
    \end{equation}
    Substituting this in \eqref{proof:bounds_axi} we obtain the bounds in item~$(ii)$, and the proof is completed.
\end{proof}

\section{Some Faber-Krahn inequalities} \label{sec:proof_FK}

This section is devoted to the proofs of the Faber-Krahn type inequalities appearing in~\Cref{cor:FK_all,cor:FK_some}. Given a domain $\Omega$ different from a disk and a disk $D$ of the same area, both proofs are based on the idea of showing that the upper bound of $\mu_D(a)$ in~\Cref{thm:geometric_bounds}~$(i)$ is smaller than the lower bound of~$\mu_\Omega(a)$ in~\Cref{thm:geometric_bounds}~$(i)$. Then, the Faber-Krahn type inequalities for the first eigenvalue of the $\overline\partial$-Robin Laplacians follow by the chain of inequalities
\begin{equation} \label{eq:chain}
    \mu_D(a) < \dfrac{\Lambda_D}{1+\dfrac{4\pi}{|\partial D|} \dfrac{1}{a}} \leq \dfrac{\Lambda_\Omega}{1+\dfrac{\Lambda_\Omega}{q_\Omega} \dfrac{1}{a}} < \mu_\Omega(a),
\end{equation}
and the Faber-Krahn type inequalities for the first positive eigenvalue of the quantum dot Dirac operators follow either by~\Cref{cor:Pointwise_Equivalence} or by~\Cref{cor:Equivalence_Conjectures}. The hypothesis on the geometry of~$\Omega$ in~\Cref{cor:FK_all}, namely
\begin{equation} \label{eq:RECall_qOmega}
    \sqrt{\dfrac{|\Omega|}{\pi}} q_\Omega \geq \dfrac{\Lambda_\disk}{2},
\end{equation}
ensures the validity of the second inequality in~\eqref{eq:chain} for all $a>0$. When, instead, the geometry of~$\Omega$ does not satisfy~\eqref{eq:RECall_qOmega}, the validity of the second inequality in~\eqref{eq:chain} is ensured only in some range of parameters.

\begin{proof}[Proof of \Cref{cor:FK_all}]
    Let $D$ be a disk of the same area as $\Omega$, and assume that \eqref{eq:RECall_qOmega} holds true (notice that then $\Omega$ can not be a disk). As explained at the beginning of the section, to prove~$(i)$ it suffices to show that the second inequality in~\eqref{eq:chain} holds. That is,
    \begin{equation} \label{eq:goal_aux1}
        \dfrac{\Lambda_D}{1+\dfrac{4\pi}{|\partial D|} \dfrac{1}{a}} \leq \dfrac{\Lambda_\Omega}{1+\dfrac{\Lambda_\Omega}{q_\Omega} \dfrac{1}{a}} \quad \text{for all } a>0.
    \end{equation}    
    Using the scaling property~$|\Omega|\Lambda_D = |D|\Lambda_D = \pi\Lambda_\disk$ and~$|\partial D| = \sqrt{4\pi |D|} = \sqrt{4\pi |\Omega|}$, from~\eqref{eq:RECall_qOmega} we have that
    \begin{equation} \label{eq:arguing}
         0 \geq \dfrac{1}{q_\Omega} - \dfrac{2}{\Lambda_\disk} \sqrt{\dfrac{|\Omega|}{\pi}} = \dfrac{1}{q_\Omega} - \dfrac{2}{\Lambda_D} \sqrt{\dfrac{\pi}{|\Omega|}} = \dfrac{1}{q_\Omega} - \dfrac{1}{\Lambda_D} \dfrac{4\pi}{|\partial D|}.
    \end{equation}
    As a consequence, for all $a>0$ there holds
    \begin{equation}
        \left( \dfrac{\Lambda_\Omega}{q_\Omega} - \dfrac{\Lambda_\Omega}{\Lambda_D} \dfrac{4\pi}{|\partial D|} \right) \dfrac{1}{a} \leq 0 < \dfrac{\Lambda_\Omega}{\Lambda_D} -1,
    \end{equation}
    where the last inequality holds true by the classical Faber-Krahn inequality~\cite{Faber1923,Krahn1925}. Rearranging terms we obtain \eqref{eq:goal_aux1}, concluding the proof of item~$(i)$. Once this is proved, item~$(ii)$ follows by~\Cref{cor:Equivalence_Conjectures}.
\end{proof}

\begin{proof}[Proof of \Cref{cor:FK_some}]
    Let $\Omega$ be different from a disk, $D$ a disk of the same area, and assume that \eqref{eq:RECall_qOmega} is not satisfied. As explained at the beginning of the section, to prove~$(i)$ it suffices to show that the second inequality in~\eqref{eq:chain} holds. That is,
    \begin{equation} \label{eq:goal_aux2}
        \dfrac{\Lambda_D}{1+\dfrac{4\pi}{|\partial D|} \dfrac{1}{a}} \leq \dfrac{\Lambda_\Omega}{1+\dfrac{\Lambda_\Omega}{q_\Omega} \dfrac{1}{a}} \quad \text{for all } a>A_\Omega,
    \end{equation}
    where
    \begin{equation}
        A_\Omega := \dfrac{\Lambda_\Omega}{\dfrac{|\Omega|}{\pi}\dfrac{\Lambda_\Omega}{\Lambda_\disk}-1} \left( \dfrac{1}{q_\Omega} - \dfrac{2}{\Lambda_\disk} \sqrt{\dfrac{|\Omega|}{\pi}} \right)
    \end{equation}
    is clearly well-defined and positive, because so is the denominator by the classical Faber-Krahn inequality~\cite{Faber1923,Krahn1925}. Arguing as in \eqref{eq:arguing}, we now have
    \begin{equation}
        0 < \dfrac{1}{q_\Omega} - \dfrac{2}{\Lambda_\disk} \sqrt{\dfrac{|\Omega|}{\pi}} = \dfrac{1}{q_\Omega} - \dfrac{1}{\Lambda_D} \dfrac{4\pi}{|\partial D|},
    \end{equation}
    hence
    \begin{equation}
        \left( \dfrac{\Lambda_\Omega}{q_\Omega} - \dfrac{\Lambda_\Omega}{\Lambda_D} \dfrac{4\pi}{|\partial D|} \right) \dfrac{1}{a} \leq \dfrac{|\Omega|}{\pi}\dfrac{\Lambda_\Omega}{\Lambda_\disk}-1 = \dfrac{\Lambda_\Omega}{\Lambda_D} -1 \quad \text{for all } a \geq A_\Omega;
    \end{equation}
    in the last equality we have used the scaling property $|\Omega|\Lambda_D = |D|\Lambda_D = \pi\Lambda_\disk$. Rearranging terms we obtain~\eqref{eq:goal_aux2}, concluding the proof of item~$(i)$. 
    
    We conclude with the proof of item~$(ii)$. To this end, we translate the above result according to~\Cref{cor:Pointwise_Equivalence}~$(ii)$. That is, we have that~$\lambda_\Omega(\theta,m) > \lambda_D(\theta,m)$
    \begin{equation} \label{eq:regime}
        \text{for all } \theta = \vartheta^{-1} \left( \dfrac{a}{\sqrt{\mu_D(a) + m^2}+m} \right) \text{ with } a\geq A_\Omega.
    \end{equation}
    Since, as shown in~\cite[Lemma~3.7]{DuranMasSanzPerela2026}, the function
    \begin{equation}
        a \mapsto \vartheta^{-1} \left( \dfrac{a}{\sqrt{\mu_D(a) + m^2}+m} \right)
    \end{equation}
    is continuous, strictly decreasing, and bijective from $(0,+\infty)$ to $(-\frac \pi 2, \frac \pi 2)$, the parameters $\theta\in(-\frac{\pi}{2}, \frac{\pi}{2})$ satisfying~\eqref{eq:regime} are 
    \begin{equation}
        -\frac{\pi}{2}> \theta \geq \vartheta^{-1} \left( \dfrac{A_\Omega}{\sqrt{\mu_D(A_\Omega)+m^2}+m} \right)= \vartheta^{-1} \left( \dfrac{A_\Omega}{\sqrt{\frac{\pi}{|\Omega|} \mu_\disk \Big( \sqrt{\frac{|\Omega|}{\pi}} A_\Omega \Big) +m^2} +m} \right) =: \Theta_{\Omega,m},
    \end{equation}
    where we have used the scaling property $\mu_{t\Omega}(a) = \mu_\Omega(ta)/t^2$ for all $t>0$. This concludes the proof of item~$(ii)$.
\end{proof}

\section*{Acknowledgments}

The author thanks Albert Mas and Tom\'as Sanz-Perela for enlightening discussions on the topic of this paper.

\end{document}